\documentclass[reqno,11pt,oneside]{amsart}
\usepackage{geometry}
\geometry{verbose,tmargin=2.5cm,bmargin=2.5cm,lmargin=2.5cm,rmargin=2.5cm}
\usepackage{amsmath, amsthm, amssymb, mathtools, mathrsfs, stmaryrd,bm}
\usepackage{amsfonts}
\usepackage{ascmac}
\usepackage{mathrsfs} 
\usepackage{xcolor}
\definecolor{HY}{RGB}{62,78,200}
\definecolor{HYH}{RGB}{105,90,190}
\usepackage[dvipdfmx, colorlinks=true, linkcolor=HY, citecolor=HYH, urlcolor=HY]{hyperref}
\usepackage{latexsym}
\usepackage{amssymb}
\address[Anju Yokoi]{Ikeda Senior High School Attached to Osaka Kyoiku University, 1-5-1, Midorigaoka, Ikeda-shi, Osaka, 563-0026, Japan}
\email{anju.scorpion@icloud.com}
\theoremstyle{definition}
\newtheorem{dfn}{Definition}[section]

\newtheorem{lem}[dfn]{Lemma}
\newtheorem{thm}[dfn]{Theorem}

\newtheorem{rem}[dfn]{Remark}

\makeatletter

\@addtoreset{equation}{section}
\makeatother
\allowdisplaybreaks
\title[A functional equation for multiple zeta functions]{A functional equation for multiple zeta functions\\
~and generalized confluent hypergeometric functions~}
\date{}
\author{Anju Yokoi}

\begin{document}
\maketitle
\begin{abstract}
  In this paper, we introduce a new function, the \emph{multiple confluent hypergeometric functions}, and establish a functional equation for the $r$-variable Euler--Zagier multiple zeta functions using it. In the case when $r=2$, this functional equation includes the well-known functional equation for the Euler--Zagier double zeta functions obtained by Matsumoto.
\end{abstract}
\section{Introduction}
\phantom{a}\\
Let $s_1,\ldots,s_r$ be complex variables, $i\coloneqq\sqrt{-1}$ and $(a)_n\coloneqq\frac{\Gamma(a+n)}{\Gamma(a)}$ in this paper. 
\phantom{a}\\
The $r$-variable Euler--Zagier sum is a kind of multiple zeta functions defined by the series
\begin{equation}
  \zeta_{EZ,r}(s_1,\ldots,s_r)\coloneqq\sum_{n_1,n_2,\ldots,n_r=1}^\infty n_1^{-s_1}(n_1+n_2)^{-s_2}\cdots(n_1+\cdots+n_r)^{-s_r},
\label{label12}\end{equation}
which is convergent absolutely when $\Re(s_{r-k+1}+\cdots+s_r)>k$ for $1\le k\le r$. When $r=1$, (\ref{label12}) is nothing but the Riemann zeta function.
The earliest result of the analytic continuation of (\ref{label12}) is due to Arakawa and Kaneko \cite{TM}, where the function is regarded as a one-variable function in $s_r$ only. However in the case $r=2$, Atkinson \cite{FV} analytically continued the function using the Poisson sum formula. Following these results, the analytic continuation of (\ref{label12}) as an $r$-variable meromorphic function has been established. (See \cite{SSY}, \cite{Mat2, Mat3} and \cite{M and S}.) In addition to these, various other methods for the analytic continuation of (\ref{label12}) have been established up to the present.
Now we have the following result.
\begin{thm}\label{thm3}
  The Euler--Zagier multiple zeta functions $\zeta_{EZ,r}(s_1,\ldots,s_r)$ can be meromorphically continued to $\mathbb{C}^r$ and has singularities on
  \[s_r=1,~s_{r-1}+s_r=2,1,0,-1,-2,-4\ldots\]
  and
  \[\sum_{i=1}^js_{r-i+1}\in\mathbb{Z}_{\le j}~,3\le j\le r\]
  where $\mathbb{Z}_{\le j}$ is the set of integers less than or equal to $j$.
\end{thm}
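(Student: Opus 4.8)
The plan is to argue by induction on $r$, reducing $\zeta_{EZ,r}$ to $\zeta_{EZ,r-1}$ via the Mellin--Barnes method. For the base case $r=1$ one has $\zeta_{EZ,1}=\zeta$, which is meromorphic on $\mathbb{C}$ with its single simple pole at $s_1=1$; this is the asserted locus with the last two families empty. For the inductive step I would first establish an integral representation. Writing
\[
(n_1+\cdots+n_r)^{-s_r}=(n_1+\cdots+n_{r-1})^{-s_r}\Bigl(1+\tfrac{n_r}{n_1+\cdots+n_{r-1}}\Bigr)^{-s_r}
\]
and substituting the Mellin--Barnes formula $(1+\lambda)^{-s}=\frac{1}{2\pi i}\int_{(c)}\frac{\Gamma(s+z)\Gamma(-z)}{\Gamma(s)}\lambda^{z}\,dz$ (valid for $\lambda>0$ and $-\Re(s)<c<0$) with $c<-1$, then performing the $n_r$-summation, which contributes $\sum_{n_r\ge1}n_r^{z}=\zeta(-z)$, and recognizing the remaining multiple sum, one gets, for $(s_1,\ldots,s_r)$ in a suitable non-empty open subset of the domain of absolute convergence,
\[
\zeta_{EZ,r}(s_1,\ldots,s_r)=\frac{1}{2\pi i}\int_{(c)}\frac{\Gamma(s_r+z)\Gamma(-z)}{\Gamma(s_r)}\,\zeta(-z)\,\zeta_{EZ,r-1}(s_1,\ldots,s_{r-2},s_{r-1}+s_r+z)\,dz .
\]
The interchange of summation and integration is justified by absolute convergence once $c$ and the real parts of the $s_i$ are chosen sufficiently large.

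Next I would displace the path to $\Re(z)=N-\varepsilon$ for a large positive integer $N$, collecting the residues of the integrand at $z=-1$ (the pole of $\zeta(-z)$) and at $z=0,1,\ldots,N-1$ (the poles of $\Gamma(-z)$, of which the even ones $\ge 2$ contribute nothing since $\zeta(-2k)=0$). This yields
\begin{multline*}
\zeta_{EZ,r}(s_1,\ldots,s_r)=\frac{\zeta_{EZ,r-1}(s_1,\ldots,s_{r-2},s_{r-1}+s_r-1)}{s_r-1}\\
+\sum_{n=0}^{N-1}\frac{(-1)^{n}(s_r)_{n}}{n!}\,\zeta(-n)\,\zeta_{EZ,r-1}(s_1,\ldots,s_{r-2},s_{r-1}+s_r+n)+J_N ,
\end{multline*}
where $J_N$ is the integral along $\Re(z)=N-\varepsilon$. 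By Stirling's formula $\Gamma(s_r+z)\Gamma(-z)$ decays exponentially in $|\Im z|$ on that line, while $\zeta(-z)$ and, by the induction hypothesis, $\zeta_{EZ,r-1}(s_1,\ldots,s_{r-2},s_{r-1}+s_r+z)$ grow at most polynomially there (away from the poles of the latter); hence $J_N$ converges and is holomorphic on a region strictly larger than the original one, reaching $\Re(s_r)>-N+\varepsilon$ and correspondingly deep in the other variables. Each of the finitely many explicit terms is meromorphic on all of $\mathbb{C}^{r}$ by the induction hypothesis, so, letting $N\to\infty$, the enlarging regions exhaust $\mathbb{C}^{r}$ and the meromorphic continuation follows.

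Finally I would read off the singular locus from the explicit terms. The factor $1/(s_r-1)$ contributes the hyperplane $s_r=1$. The pole of the last variable of $\zeta_{EZ,r-1}$ (at $t_{r-1}=1$, by the induction hypothesis), pulled back along the substitutions $t_{r-1}=s_{r-1}+s_r-1$ and $t_{r-1}=s_{r-1}+s_r+n$ and sieved by the trivial zeros of $\zeta$, accounts for the family $s_{r-1}+s_r=2,1,0,-1,-2,-4,\ldots$. The lower partial-sum singularities of $\zeta_{EZ,r-1}$ --- which by the induction hypothesis are $\sum_{i=1}^{j}t_{r-i}\in\mathbb{Z}_{\le j}$ for $3\le j\le r-1$, together with the $t_{r-2}+t_{r-1}$-family of the shape just described --- are likewise pulled back and translated by the various $n$, and on taking the union over $n$ they fill in exactly $\sum_{i=1}^{j}s_{r-i+1}\in\mathbb{Z}_{\le j}$ for $3\le j\le r$; this closes the induction. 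I expect the main obstacle to be exactly this bookkeeping: checking that the images of the hyperplanes of the singular set of $\zeta_{EZ,r-1}$, once trimmed by the $\Gamma$-factors and the trivial zeros of $\zeta$, reassemble into precisely the stated set. A secondary difficulty is the uniformity required to move the contour, in particular ruling out a pinch of the path by the moving poles of $\zeta_{EZ,r-1}(s_1,\ldots,s_{r-2},s_{r-1}+s_r+z)$ as $(s_1,\ldots,s_r)$ varies. (An alternative, non-inductive route to the continuation is the Euler--Maclaurin summation method.)
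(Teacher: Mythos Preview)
The paper does not prove this theorem. It is stated in the introduction as a known result, with references to \cite{SSY}, \cite{Mat2, Mat3}, and \cite{M and S}, and serves only as background for the paper's own contributions; there is no argument in the paper to compare your proposal against.

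That said, your outline is correct and is precisely the Mellin--Barnes method of Matsumoto \cite{Mat3}, one of the works the paper cites for this result. The integral representation you write down, the rightward contour shift collecting the residues at $z=-1$ and at $z=0,1,2,\ldots$, the elimination of the even positive $z$-residues via the trivial zeros $\zeta(-2k)=0$, and the inductive reading-off of the singular hyperplanes are exactly the steps carried out there. One small point: the polynomial vertical-line growth of $\zeta_{EZ,r-1}$ that you invoke to justify shifting the contour is not part of the statement being proved, so your induction hypothesis must be strengthened to include such a bound; this is standard and is how the argument is actually run in \cite{Mat3}. The Euler--Maclaurin alternative you mention at the end is the method of \cite{SSY}, where the exact determination of the singular locus (including the gaps produced by the trivial zeros) is also carried out in detail.
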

Once the Euler--Zagier multiple zeta functions in $r$-variables has been analytically continued, it is natural from a mathematical perspective to investigate its properties. In \cite{Mat1}, Matsumoto studied multiple zeta functions of Euler--Zagier type for $r=2$, and gave the functional equation for it, as the following theorem.
\begin{thm}[{\cite[Theorem 1]{Mat1}}]\label{thm1}
  Let $\Gamma(s)$ be the gamma function, respectively. We have
  \begin{equation}
    \frac{g(u,v)}{(2\pi)^{u+v-1}\Gamma(1-u)}=\frac{g(1-v,1-u)}{i^{u+v-1}\Gamma(v)}+2i\sin\left(\frac{\pi}{2}(u+v-1)\right)F_+(u,v),\notag
  \end{equation}
  where $u$ and $v$ are complex variables and $F_+(u,v)$ and $g(u,v)$ are defined by
  \begin{equation}
    F_+(u,v)\coloneqq\sum_{k=1}^\infty\sigma_{u+v-1}(k)\Psi(v,u+v;2\pi ik),
  \label{label9}\end{equation}
  and
  \begin{equation}
    g(u,v)\coloneqq\zeta_{EZ,2}(u,v)-\frac{\Gamma(1-u)}{\Gamma(v)}\Gamma(u+v-1)\zeta_{EZ,1}(u+v-1),\label{label20}
  \end{equation}
  by using the divisor sum function $\sigma_{s-1}(k)\coloneqq\sum_{d|k} d^{s-1}$ and the confluent hypergeometric function
\begin{equation}
  \Psi(a,c;x)\coloneqq\frac{1}{\Gamma(a)}\int_0^{\infty e^{i\phi}}e^{-xy}y^{a-1}(1+y)^{c-a-1}dy,\notag
\end{equation}
which is valid under the conditions $\Re a>0$, $-\pi<\phi<\pi$ and $|\phi+\arg x|<\pi/2$.
\end{thm}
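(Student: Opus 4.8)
Write $\zeta(s)$ for the Riemann zeta function $\zeta_{EZ,1}(s)$ and put $\chi(s)=2^{s}\pi^{s-1}\Gamma(1-s)\sin(\pi s/2)$, so that $\zeta(s)=\chi(s)\zeta(1-s)$. The plan is to represent $g(u,v)$ by a Mellin--Barnes integral, transform the integrand by the functional equation of $\zeta$, recognise the resulting Barnes integrals as confluent hypergeometric functions, and close the argument with Kummer's transformation for $\Psi$. \textbf{Step 1.} First, inserting $(m+n)^{-v}=\frac{1}{2\pi i}\int_{(c)}\frac{\Gamma(v+z)\Gamma(-z)}{\Gamma(v)}m^{-v-z}n^{z}\,dz$ (valid for $-\Re v<c<0$) into \eqref{label12} with $r=2$ and summing over $m,n\ge1$ yields, in a region of absolute convergence,
\[
\zeta_{EZ,2}(u,v)=\frac{1}{2\pi i}\int_{(c)}\frac{\Gamma(v+z)\Gamma(-z)}{\Gamma(v)}\,\zeta(u+v+z)\,\zeta(-z)\,dz .
\]
The integrand has poles from the two gamma factors, a pole at $z=1-u-v$ from $\zeta(u+v+z)$ with residue $\frac{\Gamma(1-u)}{\Gamma(v)}\Gamma(u+v-1)\zeta(u+v-1)$, and a pole at $z=-1$ from $\zeta(-z)$; moving the contour across $z=1-u-v$ (and across no other pole) therefore represents $g(u,v)$ of \eqref{label20} by the same integral over a contour $L$ on the other side of that pole.

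\textbf{Step 2.} On $L$ I would apply $\zeta(u+v+z)=\chi(u+v+z)\zeta(1-u-v-z)$, expand $\zeta(1-u-v-z)=\sum_{d\ge1}d^{\,u+v-1}d^{z}$ and $\zeta(-z)=\sum_{e\ge1}e^{z}$, and interchange the two sums with the integral. Writing $\chi(u+v+z)=2^{u+v+z}\pi^{u+v+z-1}\Gamma(1-u-v-z)\sin\!\big(\tfrac{\pi}{2}(u+v+z)\big)$, combining its $(2\pi)^{z}$ with $(de)^{z}$, and using $\sin\!\big(\tfrac{\pi}{2}(u+v+z)\big)=\tfrac{1}{2i}\big(e^{i\pi(u+v+z)/2}-e^{-i\pi(u+v+z)/2}\big)$, one is left with two sums of Barnes integrals $\frac{1}{2\pi i}\int_{L}\Gamma(v+z)\Gamma(-z)\Gamma(1-u-v-z)\,(\pm 2\pi i\,de)^{z}\,dz$. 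Since the Mellin transform of $\Psi(a,c;\,\cdot\,)$ is $\frac{\Gamma(s)\Gamma(s+1-c)\Gamma(a-s)}{\Gamma(a)\Gamma(a+1-c)}$, each such integral equals $\Gamma(v)\Gamma(1-u)\,\Psi(v,u+v;\pm 2\pi i\,de)$; grouping the pairs $(d,e)$ by $k=de$ turns $\sum_{d\mid k}d^{u+v-1}$ into $\sigma_{u+v-1}(k)$, so, writing $F_-(u,v):=\sum_{k\ge1}\sigma_{u+v-1}(k)\Psi(v,u+v;-2\pi ik)$,
\[
g(u,v)=-i\,(2\pi)^{u+v-1}\Gamma(1-u)\big(e^{i\pi(u+v)/2}F_+(u,v)-e^{-i\pi(u+v)/2}F_-(u,v)\big).
\]

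\textbf{Step 3.} Kummer's transformation $\Psi(v,u+v;z)=z^{\,1-u-v}\Psi(1-u,2-u-v;z)$ together with the identity $\sigma_{u+v-1}(k)\,k^{\,1-u-v}=\sigma_{1-u-v}(k)$ gives the reflection relations $F_{\pm}(u,v)=(\pm 2\pi i)^{\,1-u-v}F_{\pm}(1-v,1-u)$. Applying the last display of Step 2 with $(u,v)$ replaced by $(1-v,1-u)$ and substituting these relations, the elementary phases collapse and one finds $g(1-v,1-u)=\Gamma(v)\big(F_+(u,v)+F_-(u,v)\big)$. Eliminating $F_-(u,v)$ between this identity and the last display of Step 2, and using $\sin\!\big(\tfrac{\pi}{2}(u+v-1)\big)=-\cos\!\big(\tfrac{\pi}{2}(u+v)\big)$ and $i^{\,u+v-1}=-i\,e^{i\pi(u+v)/2}$, one obtains exactly the stated functional equation in the region of convergence; it then holds on all of $\mathbb{C}^{2}$ by analytic continuation (Theorem~\ref{thm3}).

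The step I expect to cost the most is not a single step but the analytic bookkeeping running through Steps 1--2. One must choose a region of $(u,v)$ and a contour $L$ so that exactly the pole $z=1-u-v$ is crossed in Step 1, so that the Dirichlet series for $\zeta(1-u-v-z)$ and $\zeta(-z)$ both converge on $L$ (legitimising the interchange of summation and integration), and so that $L$ lies in the vertical strip where the Mellin-transform representation of $\Psi$ is valid, with the branch of $(\pm 2\pi i\,de)^{z}$ fixed so that the argument of $\Psi$ comes out as $\pm 2\pi ik$ inside the domain of the integral defining $\Psi$ in the statement. Each of these requires Stirling-type decay estimates for the gamma factors and a careful chase of the powers of $2$, $\pi$ and $i$ to land on the precise constants $2i\sin(\tfrac{\pi}{2}(u+v-1))$ and $i^{-(u+v-1)}$. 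By contrast, once the last display of Step 2 is in hand, Step 3 is a short algebraic manipulation, so the conceptual kernel is just the Barnes-integral evaluation of Step 2 and the Kummer-transformation reflection of Step 3. An alternative route would be an Atkinson-type dissection of the double sum combined with Poisson summation, in the spirit of Atkinson's treatment of the case $r=2$ recalled in the introduction; but, the statement being phrased through the confluent hypergeometric function, the Mellin--Barnes approach is the more transparent one.
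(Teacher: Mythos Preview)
Your approach is correct (modulo the analytic bookkeeping you rightly flag), but it reaches the key intermediate identity by a genuinely different route from the paper. Both arguments pivot on the same formula---your Step~2 display is exactly the $r=2$ case of the paper's Theorem~\ref{Theorem3},
\[
g(u,v)=(2\pi)^{u+v-1}\Gamma(1-u)\bigl(e^{\frac{\pi i}{2}(u+v-1)}F_+(u,v)+e^{-\frac{\pi i}{2}(u+v-1)}F_-(u,v)\bigr),
\]
and from there both use Kummer's transformation $\Psi(b,c;x)=x^{1-c}\Psi(b-c+1,2-c;x)$ together with $\sigma_{u+v-1}(k)\,k^{1-u-v}=\sigma_{1-u-v}(k)$ to produce the reflected relation $g(1-v,1-u)=\Gamma(v)(F_++F_-)$ and then eliminate $F_-$ (cf.\ \eqref{label5} and the proof of Theorem~\ref{main theorem}). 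The difference lies in how the displayed identity is obtained. The paper starts from the iterated-integral representation \eqref{label2}, subtracts the polar part $1/(t_1+t_2)$ to form $\mathscr{G}_2=g$, deforms the $t_1$-integral onto a Hankel contour $\mathcal{C}$, and blows it up to a large circle so that the residue theorem at $t_1+t_2=2\pi in$ ($n\neq 0$) hands over the terms $\Psi(\,\cdot\,;\pm 2\pi ik)$ directly; the functional equation of $\zeta$ is never invoked. You instead separate $(m+n)^{-v}$ by a Mellin--Barnes integral, shift the contour past the $\zeta$-pole to produce $g$, apply $\zeta(s)=\chi(s)\zeta(1-s)$ to $\zeta(u+v+z)$, and recognise the resulting Barnes integral as $\Psi$ via its Mellin transform. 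The paper's Hankel/residue method is what generalises cleanly to $r\ge 3$, since it never needs to decouple the variables in $(n_1+\cdots+n_r)^{-s_r}$; your Mellin--Barnes route is more economical for $r=2$ and makes the hidden role of the Riemann functional equation explicit.
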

According to \cite{Y.K.H.1}, since the second term on the right-hand side vanishes on the hyperplane 
\[
  \Omega_{2k+1}\coloneqq\{(s_1,s_2)\in\mathbb{C}^2\mid s_1+s_2=2k+1\}\quad(k\in\mathbb{Z}),
\]
 the above expression yields a beautiful symmetric form such as 
\begin{equation}
 \frac{1}{(2\pi)^{2k}\Gamma(1-s_1)}\zeta_{EZ,2}(s_1,s_2)=\frac{(-1)^k}{\Gamma(s_2)}\left\{\zeta_{EZ,2}(1-s_2,1-s_1)-\frac{B_{2k}}{4k}\right\}\label{equation1}\end{equation}
 
 when restricted to $\Omega_{2k+1}$. Here we denote the $k$-th Bernoulli number as $B_k$.

 Matsumoto also introduced the Mordell--Tornheim multiple zeta function
\begin{equation}
  \zeta_{MT,r}(s_1,\ldots,s_r;s_{r+1})\coloneqq\sum_{m_1,m_2,\ldots,m_r=1}^\infty  m_1^{-s_1}\cdots m_r^{-s_r}(m_1+\cdots+m_r)^{-s_{r+1}},\notag
\end{equation}
which is absolutely convergent in the following region,
\begin{equation}
  \sum_{\ell=1}^j\Re s_{k_\ell}+\Re s_{r+1}>j\notag
\end{equation}
with $1\le k_1<k_2<\cdots<k_j\le r$ for any $j=1,2\ldots,r$.
In addition, Okamoto and Onozuka \cite{Oka and Ono} derived the functional equation for the Mordell--Tornheim multiple zeta functions as the following theorem.
In \cite{Oka and Ono}, Okamoto and Onozuka prepared some functions to state their main theorem. More precisely, they introduced two divisor functions
\[
  \sigma_a(\ell_1,\ldots,\ell_r)\coloneqq\sum_{d|\ell_1,\ldots,d|\ell_r}d^a,
\]
and
 \begin{equation}
    \sigma_{MT,r}(s_1,\ldots,s_r,s_{r+1};\ell_1,\ldots,\ell_r)\coloneqq\sum_{d_1|\ell_1,\ldots,d_r|\ell_r}d_1^{s_1}\cdots d_r^{s_r}(d_1+\cdots+d_r)^{s_{r+1}}.\notag
  \end{equation}
  Furthermore, they put
 \begin{equation}
    g_r(s_1,\ldots,s_{r+1})\coloneqq\begin{multlined}[t]\zeta_{MT,r}(s_1,\ldots,s_r;s_{r+1})\\-\frac{\Gamma(1-s_r)\Gamma(s_r+s_{r+1}-1)}{\Gamma(s_{r+1})}\zeta_{MT,r-1}(s_1,\ldots,s_{r-1};s_r+s_{r+1}-1),\end{multlined}\label{label15}
  \end{equation}
  and
  \begin{equation}
    F_r^\pm(s_1,\ldots,s_{r+1})\coloneqq\begin{multlined}[t]\sum_{\ell_1,\ldots,\ell_{r-1}=1}^\infty \frac{\sigma_{s_1+\cdots+s_{r+1}-1}(\ell_1,\ldots,\ell_{r-1})}{\ell_1^{s_1}\cdots \ell_{r-1}^{s_{r-1}}}\\\times\Psi(s_{r+1},s_r+s_{r+1};\pm2\pi i(\ell_1+\cdots+\ell_{r-1})).\end{multlined}
  \label{label11}\end{equation}
\begin{thm}[{\cite[Theorem 1.2]{Oka and Ono}}]\label{thm2}
  We have
  \begin{align}
    \begin{multlined}[t]
     \frac{g_r(-s_1,\ldots,-s_{r-1},1-s_{r+1},1-s_r)}{i^{s_r+s_{r+1}-1}\Gamma(s_{r+1})}\\
     +e^{\frac{\pi i}{2}(s_r+s_{r+1}-1)}F_r^+(s_1,\ldots,s_{r+1})+e^{-\frac{\pi i}{2}(s_r+s_{r+1}-1)}F_r^-(s_1,\ldots,s_{r+1})
    \end{multlined}\notag\\
    =\begin{multlined}[t]
      \frac{g_r(s_1,\ldots,s_{r-1},s_r,s_{r+1})}{(2\pi)^{s_r+s_{r+1}-1}\Gamma(1-s_r)}\\
      +e^{-\frac{\pi i}{2}(s_r+s_{r+1}-1)}\sum_{\ell_1,\ldots,\ell_{r-1}=1}^\infty\sigma_{MT,r-1}(s_1,\ldots,s_{r-1},s_r+s_{r+1}-1;l_1,\ldots,\ell_{r-1})\\\times\left\{\Psi(s_{r+1},s_r+s_{r+1};2\pi i(\ell_1+\cdots+\ell_{r-1}))\right.\\\left.+\Psi(s_{r+1},s_r+s_{r+1};-2\pi i(\ell_1+\cdots+\ell_{r-1}))\right\}.
    \end{multlined}\notag
  \end{align}
\end{thm}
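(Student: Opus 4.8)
The plan is to reconstruct the argument by the Mellin--Barnes method, in the spirit of Matsumoto's proof of Theorem~\ref{thm1}, treating $m_1,\dots,m_{r-1}$ (equivalently $s_1,\dots,s_{r-1}$) as spectators bundled into a depth-$(r-1)$ Mordell--Tornheim zeta function and performing the functional-equation manipulation only on $m_r$ together with the pair $(s_r,s_{r+1})$. \textbf{Step 1} is the basic integral identity: inserting $(1+x)^{-s}=\frac{1}{2\pi i\,\Gamma(s)}\int_{(c)}\Gamma(s+z)\Gamma(-z)x^{z}\,dz$ with $x=m_r/(m_1+\cdots+m_{r-1})$ into the defining series of $\zeta_{MT,r}$ and summing out $m_r$ and then $m_1,\dots,m_{r-1}$ gives, in a suitable region,
\[
\zeta_{MT,r}(s_1,\dots,s_r;s_{r+1})=\frac{1}{2\pi i\,\Gamma(s_{r+1})}\int_{(c)}\Gamma(s_{r+1}+z)\,\Gamma(-z)\,\zeta(s_r-z)\,\zeta_{MT,r-1}(s_1,\dots,s_{r-1};s_{r+1}+z)\,dz .
\]
Shifting the contour to the right, the pole of $\zeta(s_r-z)$ at $z=s_r-1$ contributes exactly $\dfrac{\Gamma(1-s_r)\Gamma(s_r+s_{r+1}-1)}{\Gamma(s_{r+1})}\,\zeta_{MT,r-1}(s_1,\dots,s_{r-1};s_r+s_{r+1}-1)$, i.e.\ the term subtracted in the definition~\eqref{label15} of $g_r$; thus $g_r$ equals the shifted integral plus the contributions of the remaining poles crossed.

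\textbf{Step 2 (functional equation and the confluent factor).} To the shifted integral apply the functional equation of $\zeta(s)$ to the factor $\zeta(s_r-z)$ and expand $\zeta(1-s_r+z)=\sum_{n\ge1}n^{s_r-1-z}$; the $\sin$-factor splits as $e^{\frac{\pi i}{2}(\cdot)}-e^{-\frac{\pi i}{2}(\cdot)}$ and the two resulting $z$-integrals have the shape $\int_{(c')}\Gamma(s_{r+1}+z)\Gamma(-z)\Gamma(1-s_r+z)\,(\pm2\pi i n\,\ell)^{-z}\,dz$, with $\ell=m_1+\cdots+m_{r-1}$. By the Barnes integral for ${}_2F_0$ (equivalently the Mellin--Barnes representation of Tricomi's function), with $\alpha=s_{r+1}$, $\gamma=s_r+s_{r+1}$ so that $\alpha-\gamma+1=1-s_r$, these equal explicit $\Gamma$- and power-of-$(2\pi)$ factors times $\Psi(s_{r+1},s_r+s_{r+1};\pm2\pi i n\ell)$. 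Collecting the elementary factors produces $(2\pi)^{s_r+s_{r+1}-1}\Gamma(1-s_r)$ and the exponentials $e^{\pm\frac{\pi i}{2}(s_r+s_{r+1}-1)}$; summing over $n$ and over the spectators $m_1,\dots,m_{r-1}$ with weights $m_j^{-s_j}$ and reindexing $\ell_j\coloneqq n\,m_j$ (so $n\ell=\ell_1+\cdots+\ell_{r-1}$ and $n^{s_r+s_{r+1}-1}\prod_j m_j^{-s_j}=n^{s_1+\cdots+s_{r+1}-1}\prod_j \ell_j^{-s_j}$), the inner sum $\sum_{n\mid\ell_1,\dots,\,n\mid\ell_{r-1}}n^{s_1+\cdots+s_{r+1}-1}$ becomes $\sigma_{s_1+\cdots+s_{r+1}-1}(\ell_1,\dots,\ell_{r-1})$, so these terms assemble precisely into $e^{\pm\frac{\pi i}{2}(s_r+s_{r+1}-1)}\,F_r^{\pm}$ from~\eqref{label11}.

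\textbf{Step 3 (the reflected side and conclusion).} It remains to show that the terms on the opposite side --- the contributions of the poles crossed in Step~1 other than $z=s_r-1$, equivalently a second evaluation of the Mellin--Barnes integral, equivalently the analytic continuation of $\zeta_{MT,r}$ at the reflected arguments --- reorganize into $\dfrac{g_r(-s_1,\dots,-s_{r-1},1-s_{r+1},1-s_r)}{i^{s_r+s_{r+1}-1}\Gamma(s_{r+1})}$ together with the stated $\sigma_{MT,r-1}$-weighted symmetric combination of $\Psi$'s, using the Kummer-type relation $\Psi(\alpha,\gamma;x)=x^{1-\gamma}\Psi(\alpha-\gamma+1,2-\gamma;x)$ to pass between the two shapes of confluent functions. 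All series converge absolutely for $\Re s_r<0$ with $\Re s_1,\dots,\Re s_{r-1}$ large (the $\Psi$-terms decay like $(\ell_1+\cdots+\ell_{r-1})^{-\Re s_{r+1}}$ by the asymptotics $\Psi(\alpha,\gamma;x)=O(|x|^{-\Re\alpha})$), which legitimizes the interchanges of summation and of contour shift; the identity then extends to the whole domain by the meromorphic continuation of the Mordell--Tornheim zeta functions and the fact that $\Psi(\alpha,\gamma;x)$ is entire in $\alpha$ and $\gamma$.

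\textbf{Main obstacle.} The crux is Step~3: precisely identifying the reflected side, i.e.\ proving that the leftover residues/integral collapse to $g_r$ at the reflected arguments plus the explicit $\sigma_{MT,r-1}$-correction rather than an inert remainder. This requires combining the functional equations of both the Riemann and the lower-depth Mordell--Tornheim zeta functions, the Kummer transformation of $\Psi$, and meticulous tracking of all $\Gamma$-factors, signs, branches of $i^{\bullet}$ versus $(2\pi)^{\bullet}$, and arithmetic weights; by comparison the convergence and contour-shift justifications, all controlled by the Tricomi asymptotics, are routine.
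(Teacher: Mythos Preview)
The paper does not prove this theorem: it is quoted verbatim from Okamoto--Onozuka \cite{Oka and Ono} as motivation and a structural template for the paper's own main result (Theorem~\ref{main theorem}), so there is no in-paper proof to compare your proposal against.

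For context, neither Okamoto--Onozuka nor this paper's analogous argument for the Euler--Zagier case proceed via the Mellin--Barnes expansion you outline. They follow Matsumoto's original contour-integral route: start from the iterated Mellin integral for the multiple zeta function, subtract the singular part of the innermost integrand (this is what produces $g_r$), deform the innermost contour to a Hankel-type loop and then to a large circle, collect residues at $t=2\pi i n$ ($n\neq 0$), and recognize the remaining integrals directly as $\Psi$; the reflected side then drops out by applying the Kummer relation $\Psi(b,c;x)=x^{1-c}\Psi(b-c+1,2-c;x)$ and a change of parameters, rather than by chasing poles of a Barnes integral. Your Mellin--Barnes scheme is a legitimate alternative in principle, and Steps~1--2 are set up correctly, but as you acknowledge, Step~3 is only a hope: you have not shown that the leftover residues actually assemble into $g_r$ at the reflected arguments plus the $\sigma_{MT,r-1}$ correction, and that identification is the entire content of the theorem. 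In the contour-integral approach this step is essentially automatic (one simply writes down $g_r$ at the reflected arguments and applies the same residue computation), which is why that route is preferred.
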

For two variables, the situation is as described above; however, no functional equation has yet been found for the Euler–Zagier multiple zeta functions when $r \ge 3$, a problem posed as an open question in \cite{Mat1}.
In the present paper, we discuss the fundamental idea based on Matsumoto's work, but with a different method of generalization, as in \cite{Oka and Ono}.
Here we introduce the space $\mathfrak{A}_r$ by
\begin{equation}
  \mathfrak{A}_r\coloneqq\{(s_1,\ldots,s_r)\mid,\;\Re s_{k+2}>1,\;1\le k\le r-2\}\subset\mathbb{C}^r\notag
\end{equation}
for $r>1$.
In that process we introduce the multiple confluent hypergeometric functions.
\begin{dfn}[Multiple confluent hypergeometric functions]
Let $a$ be a positive integer. We define the multiple confluent hypergeometric functions by the following infinite integral
\begin{align}
  &\Psi_a(h_1,\ldots,h_{a+1};x_1,\ldots,x_a;\delta)\notag\\
  &\coloneqq
  \begin{multlined}[t]\frac{1}{\Gamma(h_2)\cdots\Gamma(h_{a+1})}\int_0^{\infty e^{i\phi}} e^{-x_at_a}t_a^{h_{a+1}-1}\int_0^{\infty e^{i\phi}} e^{-x_{a-1}t_{a-1}}t_{a-1}^{h_a-1}
  \\\times\int_0^{\infty e^{i\phi}}\cdots\int_0^{\infty e^{i\phi}} e^{-x_2t_2}t_2^{h_3-1}\int_0^{\infty e^{i\phi}} e^{-x_1t_1}t_1^{h_2-1}(\delta+t_1+t_2+\cdots+t_a)^{h_1-1}dt_1\cdots dt_a,
  \end{multlined}\notag
  \end{align}
  where $0\le \delta\le 1$, complex variables $h_2,\ldots,h_{a+1}$ satisfy $\Re h_k>0$ for $2\le k\le a+1$ and $\phi$ satisfies $|\phi+\arg x|<\pi/2$.
\end{dfn}
We introduce two functions 
  \begin{equation*}
    \mathscr{F}_\pm^r(s_1,\ldots,s_r)\coloneqq \begin{multlined}[t]\sum_{k_1,\ldots,k_{r-1}=1}^\infty \sigma_{s_1+\cdots+s_r-1}(k_1,\ldots,k_{r-1})\\\times\Psi_{r-1}(s_1,\ldots,s_r;\pm2\pi ik_1,\pm2\pi i(k_1+k_2),\ldots,\pm2\pi i(k_1+\cdots+k_{r-1});1).\end{multlined}
  \end{equation*}
  which performs the same role as (\ref{label9}) and (\ref{label11}). This function is absolutely convergent when $\Re s_1<0$ and $\Re s_k>1$ for $2\le k\le r$. (See Theorem \ref{Theorem3}.) Moreover it can be continued meromorphically to $\mathfrak{A}_r$ space. (See Theorem \ref{thm2}.)
Furthermore, we put
 \begin{equation}\begin{multlined}[t] \mathscr{G}_r(s_1,\ldots,s_r)\coloneqq\zeta_{EZ,r}(s_1,\ldots,s_r)-\frac{1}{\Gamma(s_1)\cdots\Gamma(s_r)}\int_0^\infty\frac{t_r^{s_r-1}}{e^{t_r}-1}\int_0^\infty \cdots\int_0^\infty\frac{t_2^{s_2-1}}{e^{t_2+\cdots+t_r}-1}\\\times\int_0^\infty\frac{t_1^{s_1-1}}{t_1+t_2+\cdots+t_r}dt_1dt_2\cdots dt_r.\end{multlined}
 \label{div}\end{equation}
This function performs the same role as (\ref{label20}) and (\ref{label15}), and also it can be continued meromorphically to $\mathbb{C}^r$ space. (See Theorem \ref{thm4}.) Then we have the functional equation for (\ref{div}) in the whole $\mathfrak{A}_r$ space.
In order to continue $\mathscr{G}_r(s_1,\ldots,s_r)$, we introduce the well-known function, the Lauricella function, which is defined for $c\not\in\mathbb{Z}_{<0}$ by the series 
\begin{equation}
  F_D^{(N)}(\mathbf{a};b,c;\mathbf{z})\coloneqq\sum_{k_1,\ldots,k_N=0}^\infty\frac{(a_1)_{k_1}\cdots(a_N)_{k_N}(b)_{k_1+\cdots+k_N}}{(c)_{k_1+\cdots+k_N}k_1!\cdots k_N!}z_1^{k_1}\cdots z_N^{k_N},\notag
\end{equation}
which converges in the region $|z_k|<1$ for $1\le k\le N$. 
Finally, we introduce a kind of divisor function
\[\sigma_{EZ,r}(s_1,\ldots,s_r;k_1,k_2,\ldots,k_r)\coloneqq\sum_{d_1|k_1,\ldots,d_r|k_r}d_1^{s_1}(d_1+d_2)^{s_2}\cdots(d_1+d_2+\cdots+d_r)^{s_r}.\]
We call $\sigma_{EZ,r}$ by the Euler--Zagier $r$-divisor function.
\begin{thm}\label{main theorem}
Let $r$ be a positive integer satisfying $r\ge 2$. When the complex variables $(s_1,\ldots,s_r)$ are contained in $\mathfrak{A}_r$, we have 
  \begin{align}
  &\frac{\mathscr{G}_r(1-\textup{wt}(\bm{s})+s_1,1-\textup{wt}(\bm{s})+s_2,s_3,\ldots,s_r)}{\Gamma(\textup{wt}(\bm{s})-s_1)i^{\textup{wt}(\bm{s})-1}}+e^{\frac{\pi i}{2}(\textup{wt}(\bm{s})-1)}\mathscr{F}_+^r(s_1,\ldots,s_r)+e^{-\frac{\pi i}{2}(\textup{wt}(\bm{s})-1)}\mathscr{F}_-^r(s_1,\ldots,s_r)\notag\\
  &=\begin{multlined}[t]\frac{\mathscr{G}_r(s_1,\ldots,s_r)}{\Gamma(1-s_1)(2\pi)^{\textup{wt}(\bm{s})-1}}\\+e^{-\frac{\pi i}{2}(\textup{wt}(\bm{s})-1)}\sum_{k_1,\ldots,k_{r-1}=1}^\infty \sigma_{EZ,r-1}(2\textup{wt}(\bm{s})-s_1-s_2-1,-s_3,\ldots,-s_r;k_1,\ldots,k_r)\\\times\sum_{m_1,\ldots,m_{r-2}=0}^\infty\frac{(s_3)_{m_1}(1-\frac{k_1}{k_1+k_2})^{m_1}}{m_1!}\cdots\frac{(s_r)_{m_{r-2}}(1-\frac{k_1}{k_1+\cdots+k_{r-1}})^{m_{r-2}}}{m_{r-2}!}\\\times(\textup{wt}(\bm{s})-s_1)_{m_1+\cdots+m_{r-2}}\{\Psi(\textup{wt}(\bm{s})-s_1+m_1+\cdots+m_{r-2},\textup{wt}(\bm{s});2\pi ik_1;1)\\+\Psi(\textup{wt}(\bm{s})-s_1+m_1+\cdots+m_{r-2},\textup{wt}(\bm{s});-2\pi ik_1;1)\}.\end{multlined}\notag
\end{align}
Here we set $\bm{s}\coloneqq\{s_1,\ldots,s_r\}$ and define $\textup{wt}(\bm{s})=s_1+\cdots+s_r$. When all components are positive integers, $\bm{s}$ is called an index, and $\textup{wt}(\bm{s})$ is referred to as its weight.
\end{thm}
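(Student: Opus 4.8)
The plan is to follow the method of Matsumoto (the case $r=2$) and of Okamoto--Onozuka (the Mordell--Tornheim case), now carried out for the Euler--Zagier sums through the multiple confluent hypergeometric functions $\Psi_a$. Every identity below will first be established on an open subset $\Omega$ of $\mathfrak{A}_r$ on which all the relevant Dirichlet series, divisor sums and iterated integrals converge absolutely and all contour deformations are legitimate --- one may take $\Re s_1$ sufficiently negative and $\Re s_2,\dots,\Re s_r$ sufficiently large, together with the image of such a region under $(s_1,\dots,s_r)\mapsto(1-\textup{wt}(\bm s)+s_1,\,1-\textup{wt}(\bm s)+s_2,\,s_3,\dots,s_r)$ --- and will then be propagated to all of $\mathfrak{A}_r$ by the identity theorem, using the meromorphic continuation of $\mathscr{G}_r$ to $\mathbb{C}^r$, of $\mathscr{F}_\pm^r$ to $\mathfrak{A}_r$, and of the triple series on the right-hand side.

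The starting point is the integral representation
\[
\zeta_{EZ,r}(\bm s)=\frac{1}{\Gamma(s_1)\cdots\Gamma(s_r)}\int_{(0,\infty)^r}\frac{t_1^{s_1-1}\cdots t_r^{s_r-1}}{\prod_{k=1}^r\bigl(e^{t_k+\cdots+t_r}-1\bigr)}\,dt ,
\]
obtained from $\eqref{label12}$ by inserting $n^{-s}=\Gamma(s)^{-1}\int_0^\infty t^{s-1}e^{-nt}\,dt$ into each factor and summing the geometric series; replacing the first denominator $\bigl(e^{t_1+\cdots+t_r}-1\bigr)^{-1}$ by $(t_1+\cdots+t_r)^{-1}$ gives exactly the integral subtracted in $\eqref{div}$, so $\mathscr{G}_r(\bm s)$ is the same integral with $\bigl(e^{t_1+\cdots+t_r}-1\bigr)^{-1}-(t_1+\cdots+t_r)^{-1}$ in place of the first factor. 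Next I would transform the innermost integral over $t_1$: with $\tau=t_2+\cdots+t_r$ held fixed, write
\[
\frac{1}{\Gamma(s_1)}\int_0^\infty\frac{t_1^{s_1-1}}{e^{t_1+\tau}-1}\,dt_1=\frac{1}{2\pi i}\int_{(c)}\zeta(z)\,\Gamma(z-s_1)\,\tau^{s_1-z}\,dz
\]
by the Mellin--Barnes formula for $(e^x-1)^{-1}$ and the beta integral $\int_0^\infty t_1^{s_1-1}(t_1+\tau)^{-z}\,dt_1=\tau^{s_1-z}\Gamma(s_1)\Gamma(z-s_1)/\Gamma(z)$; shift the contour to the left; and on the shifted line apply $\zeta(z)=2^z\pi^{z-1}\sin(\tfrac{\pi z}{2})\Gamma(1-z)\zeta(1-z)$, expand $\zeta(1-z)=\sum_{k\ge1}k^{z-1}$, and evaluate the remaining Barnes integrals via $\frac{1}{2\pi i}\int\Gamma(a+w)\Gamma(-w)x^w\,dw=\Gamma(a)(1+x)^{-a}$. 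The residue at $z=1$ accounts, after the outer integrations, for the term subtracted in $\eqref{div}$, while the shifted integral produces expressions of the form $\Gamma(1-s_1)\sum_{k\ge1}\bigl[(\tau+2\pi ik)^{s_1-1}+(\tau-2\pi ik)^{s_1-1}\bigr]$.

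Feeding these back into the outer $t_2,\dots,t_r$ integrals, expanding each geometric series $\bigl(e^{t_k+\cdots+t_r}-1\bigr)^{-1}=\sum_{n_k\ge1}e^{-n_k(t_k+\cdots+t_r)}$, rescaling each $t_j$ by $\pm2\pi i$ times a common divisor onto a ray $\int_0^{\infty e^{i\phi}}$, and performing the rearrangement $\sum_{d\mid k_1,\dots,k_{r-1}}d^{\textup{wt}(\bm s)-1}=\sigma_{\textup{wt}(\bm s)-1}(k_1,\dots,k_{r-1})$, the nested integral $\int(1+t_1+\cdots+t_{r-1})^{s_1-1}\prod_j t_j^{s_{j+1}-1}e^{-x_j t_j}\,dt$ is recognized precisely as the multiple confluent hypergeometric function $\Psi_{r-1}$ evaluated at $x_j=\pm2\pi i(k_1+\cdots+k_j)$; the pieces so obtained are then matched against the terms $e^{\frac{\pi i}{2}(\textup{wt}(\bm s)-1)}\mathscr{F}_+^r+e^{-\frac{\pi i}{2}(\textup{wt}(\bm s)-1)}\mathscr{F}_-^r$ and $\mathscr{G}_r(\bm s)\big/\bigl(\Gamma(1-s_1)(2\pi)^{\textup{wt}(\bm s)-1}\bigr)$ appearing in the equation. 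It remains to produce the ``reflected'' term $\mathscr{G}_r(1-\textup{wt}(\bm s)+s_1,1-\textup{wt}(\bm s)+s_2,s_3,\dots,s_r)$ and the last sum on the right: for this one starts from the integral representation of the reflected $\mathscr{G}_r$ and expands the factor $(1+t_1+\cdots+t_{r-1})^{\textup{wt}(\bm s)-s_1-1}$ --- equivalently, after one further Gamma-integral, a product $\prod_{j}(1-z_ju)^{-s_{j+2}}$ with $z_j=1-\tfrac{k_1}{k_1+\cdots+k_{j+1}}$ --- by the Lauricella series $F_D^{(r-2)}$, which collapses $\Psi_{r-1}$ to the single function $\Psi\bigl(\textup{wt}(\bm s)-s_1+m_1+\cdots+m_{r-2},\textup{wt}(\bm s);\pm2\pi ik_1;1\bigr)$ carrying the weights $\prod_{j=1}^{r-2}(s_{j+2})_{m_j}z_j^{m_j}/m_j!$ and $(\textup{wt}(\bm s)-s_1)_{m_1+\cdots+m_{r-2}}$, while the divisibility conditions $d_1\mid k_1,\dots,d_{r-1}\mid k_{r-1}$ reorganize into $\sigma_{EZ,r-1}\bigl(2\textup{wt}(\bm s)-s_1-s_2-1,-s_3,\dots,-s_r;k_1,\dots,k_{r-1}\bigr)$. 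Collecting all contributions on $\Omega$ gives the asserted identity.

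The main obstacle lies in the last step. First, the contour shifts and the exchanges of the infinite divisor and geometric sums with the iterated integrals and with the ray deformations from $\int_0^\infty$ to $\int_0^{\infty e^{i\phi}}$ must be justified uniformly: the admissible rotation of the inner $t_1$-contour depends on $\arg(t_2+\cdots+t_r)$, hence on the outer integration variables, so these manipulations cannot be done naively under the integral sign and call for a careful choice of $\Omega$ together with truncation and absolute-convergence estimates at the intermediate stages. Second --- this is what is genuinely new beyond $r=2$, where the inner $m_j$-sums are empty --- one must verify that the Lauricella expansion of $(1+t_1+\cdots+t_{r-1})^{\textup{wt}(\bm s)-s_1-1}$ yields precisely the Pochhammer weights $(s_3)_{m_1}\cdots(s_r)_{m_{r-2}}(\textup{wt}(\bm s)-s_1)_{m_1+\cdots+m_{r-2}}$, the arguments $1-\tfrac{k_1}{k_1+\cdots+k_{j+1}}$, and, above all, the precise shifted entries $2\textup{wt}(\bm s)-s_1-s_2-1,-s_3,\dots,-s_r$ of the Euler--Zagier divisor function --- a delicate but purely combinatorial bookkeeping that identifies the reflected Euler--Zagier integral with the triple series.
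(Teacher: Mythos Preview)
Your ingredients are correct, but you are working much harder than necessary and missing the paper's key structural simplification. The paper's proof of the main theorem is essentially two lines: Theorem~\ref{Theorem3} states
\[
\frac{\mathscr{G}_r(s_1,\ldots,s_r)}{\Gamma(1-s_1)(2\pi)^{\textup{wt}(\bm s)-1}}
= e^{\frac{\pi i}{2}(\textup{wt}(\bm s)-1)}\mathscr{F}_+^r(s_1,\ldots,s_r)+e^{-\frac{\pi i}{2}(\textup{wt}(\bm s)-1)}\mathscr{F}_-^r(s_1,\ldots,s_r),
\]
which already identifies the first term on the right with the second and third terms on the left. One then applies the \emph{same} identity at the reflected point $(1-\textup{wt}(\bm s)+s_1,1-\textup{wt}(\bm s)+s_2,s_3,\ldots,s_r)$, whose weight is $2-\textup{wt}(\bm s)$, and rewrites the resulting $\mathscr{F}_\pm^r$ via equation~\eqref{label5} (which is Lemma~2.2, the reduction of $\Psi_{r-1}$ to the one-variable $\Psi$); this is formula~\eqref{label8}, and dividing by $i^{\textup{wt}(\bm s)-1}$ matches the remaining two terms. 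That is the whole proof.

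Your plan re-derives both of these inputs from scratch, and by a different technical route: steps~1--6 of your outline (Mellin--Barnes for $(e^x-1)^{-1}$, contour shift, Riemann functional equation, Barnes evaluation) are an alternative proof of Theorem~\ref{Theorem3}, which the paper obtains instead by direct residue calculus on $h(z)=(e^z-1)^{-1}-z^{-1}$ (Theorems~3.1 and~\ref{Theorem3}); your step~7 (the Lauricella/$F_D$ expansion) is exactly Lemma~2.2. Both routes are valid. The paper's residue argument is shorter and avoids the delicate uniform justification of contour rotations that you flag as an obstacle; your Mellin--Barnes route has the advantage that the subtracted term in $\mathscr{G}_r$ emerges transparently as the residue at $z=1$. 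What you do not exploit is that the ``reflected'' piece requires \emph{no new analysis}: it is the same identity applied at another point, followed by \eqref{label5}. Recognizing this collapses your last paragraph of ``delicate bookkeeping'' to a single substitution $(s_1,s_2)\mapsto(1-\textup{wt}(\bm s)+s_1,1-\textup{wt}(\bm s)+s_2)$, under which $1-s_1\mapsto\textup{wt}(\bm s)-s_1$, $1-s_1-s_2\mapsto 2\textup{wt}(\bm s)-s_1-s_2-1$, and $2-\textup{wt}\mapsto\textup{wt}$, producing exactly the Pochhammer weights and the $\sigma_{EZ,r-1}$ exponents in the statement.
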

\begin{rem}
  The above theorem takes the form of an analogue of \cite[Theorem 1.2]{Oka and Ono}, and moreover it constitutes a generalization of \cite[Theorem 1]{Mat1}.
     In Theorem \ref{thm1}, the term $F_-(u,v)$ does not appear, and on the hyperplane we obtain a beautiful equation \ref{equation1}. However, when $r\ge 3$, such a phenomenon does not occur in the above Theorem; thus, it is evident that the result in Matsumoto's work is particularly elegant.
\end{rem}
\subsection{Examples.}
\phantom{a}\\
The following equation evidently holds by definition of the multiple confluent hypergeometric functions,
\begin{equation}
  \Psi_1(h_1,h_2;x_1;1)=\Psi(h_2,h_1+h_2;x_1).\notag
\end{equation}
Hence, it is straightforward to verify that Theorem \ref{thm1} holds by the Theorem \ref{main theorem} in the case when $r=2$.
Here we understand that $m_1,\ldots,m_{r-2}=0$ if $r=2$. 

In the case $r=3$ where we have
\begin{align}
  &\frac{\mathscr{G}_3(1-s_2-s_3,1-s_1-s_3,s_3)}{\Gamma(s_2+s_3)i^{s_1+s_2+s_3-1}}+e^{\frac{\pi}{2}(s_1+s_2+s_3-1)}\mathscr{F}_+^r(s_1,s_2,s_3)+e^{-\frac{\pi}{2}(s_1+s_2+s_3-1)}\mathscr{F}_-^r(s_1,s_2,s_3)\notag\\
  &=\begin{multlined}[t]\frac{\mathscr{G}_3(s_1,s_2,s_3)}{\Gamma(1-s_1)(2\pi)^{s_1+s_2+s_3-1}}+\sum_{k_1,k_2=1}^\infty \sigma_{EZ,2}(s_1+s_2+2s_3-1,-s_3;k_1,k_2)\\\times\sum_{m_1=0}^\infty\frac{(s_3)_{m_1}(1-\frac{k_1}{k_1+k_2})^{m_1}}{m_1!}(s_2+s_3)_{m_1}\{\Psi(s_2+s_3+m_1,s_1+s_2+s_3;2\pi ik_1;1)\\+\Psi(s_2+s_3+m_1,s_1+s_2+s_3;-2\pi ik_1;1)\}.\end{multlined}\notag
\end{align}
\section*{Acknowledgement}
The author would like to express his sincere gratitude to Prof.~Kohji Matsumoto and Prof.~Yayoi Nakamura for their many helpful pieces of advice and for holding a seminar on revising this paper for the author, and especially to Prof.~Kohji Matsumoto for his careful reading of the manuscript.
The author also wishes to thank Prof.~Shin-ichiro Seki for letting him know this open problem and encourage him attacking, Hanamichi Kawamura for teaching the author the basics of mathematics, and Prof.~Masanobu Kaneko for giving the author the opportunity to meet Prof.~Matsumoto.  
Thanks are also due to Yuushinn Saitou, Takumi Maesaka, Yuusuke Tabata, and Haruna Kai for their kind encouragement.  
This work  supported by the Academic Research Club of KADOKAWA DWANGO Educational Institute.  
Finally, the author wishes to thank all those who have supported him and his mathematics throughout this journey.
\section{Properties of multiple confluent hypergeometric functions}\label{sec2}
\begin{lem}\label{lem1}Let $n$ be a positive integer. We have
  \begin{align*}
    &\int_0^\infty x^{h_1-1}(1+x)^{h_2-1}(1+\alpha_1x)^{h_3-1}\cdots(1+\alpha_nx)^{h_{n+2}-1}dx\\
    &=\begin{multlined}[t]\frac{\Gamma(h_1)\Gamma(1-h_1-h_2-\cdots-h_{n+2}+n)}{\Gamma(1-h_2-h_3-\cdots-h_{n+2}+n)} \\\times F_D^{(n)}(1-h_3,\ldots,1-h_{n+2};h_1,1-h_2-h_3-\cdots-h_{n+2}+n;1-\alpha_1,\ldots,1-\alpha_n),
    \end{multlined}\notag
  \end{align*}
  where $h_j$, $\alpha_k$ are complex variables for $1\le j\le n+2$ and $1\le k\le n$.
\end{lem}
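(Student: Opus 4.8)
The plan is to evaluate the left-hand integral by iterated reduction to the Euler-type beta integral, introducing one Feynman-style parameter per factor $(1+\alpha_k x)^{h_{k+2}-1}$. First I would substitute $x = t/(1-t)$, so that $x\in(0,\infty)$ becomes $t\in(0,1)$, $dx = (1-t)^{-2}\,dt$, $1+x = (1-t)^{-1}$, and $1+\alpha_k x = (1-(1-\alpha_k)t)/(1-t)$. This turns the integrand into
\[
 t^{h_1-1}(1-t)^{\,h_1+h_2+\cdots+h_{n+2}-n-2}\,\prod_{k=1}^{n}\bigl(1-(1-\alpha_k)t\bigr)^{h_{k+2}-1}\,dt,
\]
which is exactly the Euler-type integral representation of the Lauricella $F_D^{(n)}$ once one reads off the parameters: the exponent of $t$ gives $b = h_1$, the exponent of $(1-t)$ gives $c - b - 1 = h_1 + h_2 + \cdots + h_{n+2} - n - 2$, hence $c = 1 - h_2 - h_3 - \cdots - h_{n+2} + n$, and each factor $\bigl(1-(1-\alpha_k)t\bigr)^{-(1-h_{k+2})}$ contributes $a_k = 1 - h_{k+2}$ and $z_k = 1-\alpha_k$. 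The beta prefactor $B(b, c-b) = \Gamma(h_1)\Gamma(c-b)/\Gamma(c)$ produces precisely $\Gamma(h_1)\Gamma(1 - h_1 - h_2 - \cdots - h_{n+2} + n)/\Gamma(1 - h_2 - \cdots - h_{n+2} + n)$, matching the claimed formula.

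The key steps, in order, are: (i) record the Euler integral representation
\[
 F_D^{(n)}(\mathbf{a};b,c;\mathbf{z}) = \frac{\Gamma(c)}{\Gamma(b)\Gamma(c-b)}\int_0^1 t^{b-1}(1-t)^{c-b-1}\prod_{k=1}^n (1-z_k t)^{-a_k}\,dt,
\]
valid for $\Re c > \Re b > 0$, which follows by expanding each $(1-z_k t)^{-a_k}$ by the binomial series, integrating term by term against the beta kernel, and using $(b)_{k_1+\cdots+k_n}/(c)_{k_1+\cdots+k_n} = B(b+K,c-b)/B(b,c-b)$ with $K = k_1+\cdots+k_n$; (ii) perform the change of variables $x = t/(1-t)$ above and simplify the exponents; (iii) match parameters and collect the $\Gamma$-factors. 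Convergence of the original integral at $x=\infty$ requires $\Re(h_1 + h_2 + \cdots + h_{n+2}) < n + 1$ and at $x = 0$ requires $\Re h_1 > 0$; these are exactly the conditions $\Re c > \Re b > 0$ needed for the $F_D^{(n)}$ integral representation, so the identity first holds on that domain and then extends by analytic continuation in the $h_j$ (the right-hand side being meromorphic), which I would note but not belabor.

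The main obstacle I anticipate is purely bookkeeping: getting the parameter dictionary exactly right, in particular tracking the sign conventions so that $a_k = 1 - h_{k+2}$ (not $h_{k+2}-1$) and verifying that the shift $c - b - 1$ lands on $h_1 + \cdots + h_{n+2} - n - 2$ rather than off by one, since an error of $\pm 1$ in $c$ propagates into both the argument of $F_D^{(n)}$ and the $\Gamma$-quotient. A secondary point worth a line is justifying the term-by-term integration in step (i): on the region $|z_k| < 1$ for all $k$ the multiple binomial series converges uniformly on compact subsets of $t\in[0,1)$ and the resulting series of beta integrals converges absolutely (it is a value of the convergent $F_D^{(n)}$ series up to the prefactor), so Fubini/Tonelli applies; the case $z_k = 1-\alpha_k$ with some $\alpha_k$ near $0$, i.e. $z_k$ near the boundary of convergence, is then handled by continuity or by restricting to the stated convergence region and continuing afterwards.
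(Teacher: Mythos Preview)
Your approach is correct and is essentially identical to the paper's: both make the substitution $x=t/(1-t)$ and then identify the resulting $\int_0^1$ integral with the Euler integral representation of $F_D^{(n)}$ (the paper does this by expanding each factor $(1-(1-\alpha_k)t)^{h_{k+2}-1}$ binomially and integrating term by term against the beta kernel, which is exactly your step~(i)). One bookkeeping slip to fix: after the substitution the exponent of $(1-t)$ is $n-h_1-h_2-\cdots-h_{n+2}$, not $h_1+h_2+\cdots+h_{n+2}-n-2$; with the correct exponent one has $c-b-1=n-h_1-\cdots-h_{n+2}$, which (with $b=h_1$) does yield the value $c=1-h_2-\cdots-h_{n+2}+n$ you state.
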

\begin{proof}
We prove it by the change of variables $x=\frac{t}{1-t}$ as follows
  \begin{align*}
    &\int_0^\infty x^{h_1-1}(1+x)^{h_2-1}(1+\alpha_1x)^{h_3-1}\cdots(1+\alpha_nx)^{h_{n+2}-1}dx\\
    &=\int_0^1 \left(\frac{t}{1-t}\right)^{h_1-1}\left(\frac{1}{1-t}\right)^{h_2-1}\left(1+\alpha_1\frac{t}{1-t}\right)^{h_3-1}\cdots\left(1+\alpha_n\frac{t}{1-t}\right)^{h_{n+2}-1}\frac{dt}{(1-t)^2}\\
    &=\int_0^1t^{h_1-1}(1-t)^{n-h_1-\cdots-h_{n+2}}(1-(1-\alpha_1)t)^{h_3-1}\cdots(1-(1-\alpha_n)t)^{h_{n+2}-1}dt\\
    &=\begin{multlined}[t]\sum_{m_1,\ldots,m_n=0}^\infty\frac{(1-h_3)_{m_1}(1-\alpha_1)^{m_1}}{m_1!}\cdots\frac{(1-h_{n+2})_{m_n}(1-\alpha_n)^{m_n}}{m_n!}\\\times\int_0^1t^{h_1+m_1+\cdots+m_n-1}(1-t)^{n-h_1-\cdots-h_{n+2}}dt\end{multlined}\\
    &=\begin{multlined}[t]\sum_{m_1,\ldots,m_n=0}^\infty\frac{(1-h_3)_{m_1}(1-\alpha_1)^{m_1}}{m_1!}\cdots\frac{(1-h_{n+2})_{m_n}(1-\alpha_n)^{m_n}}{m_n!}\\\times\frac{\Gamma(h_1+m_1+\cdots+m_n)\Gamma(1-h_1-h_2-\cdots-h_{n+2}+n)}{\Gamma(1-h_2-h_3-\cdots-h_{n+2}+m_1+\cdots+m_n+n)}.\end{multlined}
  \end{align*}
  We used the formula $\int_0^1t^{n-1}(1-t)^{m-1}dt=\frac{\Gamma(n)\Gamma(m)}{\Gamma(n+m)}$ in the last equality.
\end{proof}
\begin{lem} Let $a$ be a positive integer and let complex variables $h_1,\ldots,h_{a+1}$ satisfy $h_1<1$ and $0<h_2+\cdots+h_{a+1}$. We have
  \begin{align}
    &\Psi_a(h_1,\ldots,h_{a+1};x_1,\ldots,x_a;1)\notag\\&=\begin{multlined}[t]x_1^{h_3+\cdots+h_{a+1}}x_2^{-h_3}\cdots x_a^{-h_{a+1}}\sum_{m_1=0}^\infty\frac{(h_3)_{m_1}(1-\frac{x_1}{x_2})^{m_1}}{m_1!}\cdots\sum_{m_{a-1}=0}^\infty\frac{(h_{a+1})_{m_{a-1}}(1-\frac{x_1}{x_a})^{m_{a-1}}}{m_{a-1}!}\\\times(1-h_1)_{m_1+\cdots+m_{a-1}}\Psi(h_2+\cdots+h_{a+1}+m_1+\cdots+m_{a-1},h_1+\cdots+h_{a+1};x_1).\end{multlined}
  \label{label14}\end{align}
\end{lem}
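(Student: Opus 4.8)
The plan is to collapse the $a$-fold integral defining $\Psi_a$ (with $\delta=1$) into a single integral over an auxiliary variable, then to expand one factor into an $(a-1)$-fold binomial series and to identify each resulting term with a confluent hypergeometric function.

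First, since $h_1<1$, I would insert the Euler integral $(1+t_1+\cdots+t_a)^{h_1-1}=\frac{1}{\Gamma(1-h_1)}\int_0^\infty e^{-w(1+t_1+\cdots+t_a)}w^{-h_1}\,dw$ into the definition of $\Psi_a$ with $\delta=1$. Interchanging the $w$-integral with the $t_1,\ldots,t_a$-integrals, the latter decouple into the Gamma-type integrals $\int_0^{\infty e^{i\phi}}e^{-(x_k+w)t_k}t_k^{h_{k+1}-1}\,dt_k=\Gamma(h_{k+1})(x_k+w)^{-h_{k+1}}$ (here $\Re h_{k+1}>0$), and the $\Gamma(h_{k+1})$ cancel the normalizing constant $1/(\Gamma(h_2)\cdots\Gamma(h_{a+1}))$. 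This gives
\[
  \Psi_a(h_1,\ldots,h_{a+1};x_1,\ldots,x_a;1)=\frac{1}{\Gamma(1-h_1)}\int_0^\infty e^{-w}\,w^{-h_1}\prod_{k=1}^a (x_k+w)^{-h_{k+1}}\,dw .
\]

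Next, for $2\le k\le a$ I would use the elementary identity $x_k+w=x_k(1+\tfrac{w}{x_1})\bigl(1-(1-\tfrac{x_1}{x_k})\tfrac{w}{x_1+w}\bigr)$ to write
\[
  (x_k+w)^{-h_{k+1}}=x_k^{-h_{k+1}}\,(1+\tfrac{w}{x_1})^{-h_{k+1}}\bigl(1-(1-\tfrac{x_1}{x_k})\tfrac{w}{x_1+w}\bigr)^{-h_{k+1}},
\]
and expand the last factor by $(1-z)^{-h}=\sum_{m\ge0}\tfrac{(h)_m}{m!}z^m$. Combining the $(1+\tfrac{w}{x_1})$-powers with the $k=1$ contribution $(x_1+w)^{-h_2}=x_1^{-h_2}(1+\tfrac{w}{x_1})^{-h_2}$ and using $(1+\tfrac{w}{x_1})^{-H}=x_1^{H}(x_1+w)^{-H}$ with $H:=h_2+\cdots+h_{a+1}$, the product becomes
\[
  \prod_{k=1}^a(x_k+w)^{-h_{k+1}}=x_1^{h_3+\cdots+h_{a+1}}x_2^{-h_3}\cdots x_a^{-h_{a+1}}\sum_{m_1,\ldots,m_{a-1}=0}^\infty\Biggl(\prod_{j=1}^{a-1}\frac{(h_{j+2})_{m_j}\bigl(1-\tfrac{x_1}{x_{j+1}}\bigr)^{m_j}}{m_j!}\Biggr)\frac{w^{M}}{(x_1+w)^{H+M}},
\]
with $M:=m_1+\cdots+m_{a-1}$. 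Integrating term by term, the substitution $w=x_1v$ turns $\int_0^\infty e^{-w}w^{M-h_1}(x_1+w)^{-(H+M)}\,dw$ into $x_1^{1-h_1-H}\,\Gamma(M+1-h_1)\,\Psi(M+1-h_1,2-h_1-H;x_1)$; Kummer's transformation $\Psi(\alpha,\gamma;z)=z^{1-\gamma}\Psi(\alpha-\gamma+1,2-\gamma;z)$ rewrites this as $\Gamma(M+1-h_1)\,\Psi(H+M,h_1+H;x_1)$, with the powers of $x_1$ cancelling, and $\Gamma(M+1-h_1)=\Gamma(1-h_1)(1-h_1)_M$ absorbs the leftover gamma factor. (The hypotheses $h_1<1$ and $0<h_2+\cdots+h_{a+1}$ are exactly what is needed for the Euler integral above and for the $M=0$ term on the right.) Reassembling yields \eqref{label14} exactly.

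The algebra is routine; the part that needs care is rigour. One must justify the Fubini interchange and the term-by-term integration, and, more delicately, the binomial expansions converge only when $|1-\tfrac{x_1}{x_k}|\cdot\sup_{w>0}|\tfrac{w}{x_1+w}|<1$ for $2\le k\le a$. This holds for the arguments actually occurring in $\mathscr{F}_\pm^r$, namely $x_\ell=\pm2\pi i(k_1+\cdots+k_\ell)$ with all $k_i\ge1$: then $x_1$ is purely imaginary, so $|\tfrac{w}{x_1+w}|<1$ for $w>0$, and $|1-\tfrac{x_1}{x_\ell}|=(k_2+\cdots+k_\ell)/(k_1+\cdots+k_\ell)<1$. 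So every step is legitimate there, and \eqref{label14} extends to general parameters by analytic continuation. I expect this convergence bookkeeping, rather than the computation itself, to be the main obstacle.
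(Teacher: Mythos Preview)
Your argument is correct, and it is genuinely different from the paper's. Both routes land on the same one-variable integral
\[
\Psi_a(h_1,\ldots,h_{a+1};x_1,\ldots,x_a;1)=\frac{1}{\Gamma(1-h_1)}\int_0^\infty e^{-w}w^{-h_1}\prod_{k=1}^a (x_k+w)^{-h_{k+1}}\,dw,
\]
which is exactly the paper's formula (\ref{label16}) at $\delta=1$ after the change $w=x_1t_1$; but the paper reaches it by the substitution $t_1=(\delta+t_2+\cdots+t_a)u$, a Kummer step, and then integrating out $t_2,\ldots,t_a$, whereas you get there in one stroke via the Euler integral for $(1+\sum t_k)^{h_1-1}$ and Fubini. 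The more substantial divergence is what happens next: the paper replaces $e^{-x_1t}$ by its Mellin--Barnes integral, invokes Lemma~\ref{lem1} (hence the Lauricella $F_D^{(n)}$), sums residues at the two pole families of $\Gamma(-s)$ and $\Gamma(h_1+\cdots+h_{a+1}-s-1)$ to produce a pair of ${}_1F_1$-series, and finally recombines them into a single $\Psi$ via the decomposition (\ref{label13}). Your approach bypasses all of that by writing $(x_k+w)^{-h_{k+1}}=x_k^{-h_{k+1}}(1+w/x_1)^{-h_{k+1}}\bigl(1-(1-\tfrac{x_1}{x_k})\tfrac{w}{x_1+w}\bigr)^{-h_{k+1}}$, expanding only the last factor, and recognizing each term directly as the integral definition of $\Psi$. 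This is shorter and more elementary; the price is the convergence constraint $|1-x_1/x_k|<1$ on the binomial expansion, which (as you note) holds in the intended application $x_\ell=\pm 2\pi i(k_1+\cdots+k_\ell)$ and propagates elsewhere by analyticity. The paper's Mellin--Barnes route avoids that restriction intrinsically and reuses Lemma~\ref{lem1} later in Theorem~\ref{thm4}, so its extra machinery is not wasted.
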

\begin{proof}

By making the substitution $t_1=(\delta+t_2+\cdots+t_a)u$, the integral becomes
\begin{align}
  &\Psi_a(h_1,\ldots,h_{a+1};x_1,\ldots,x_a;\delta)\notag\\
  &=
  \begin{multlined}[t]\frac{1}{\Gamma(h_2)\cdots\Gamma(h_{a+1})}\int_0^\infty e^{-x_at_a}t_a^{h_{a+1}-1}\int_0^\infty e^{-x_{a-1}t_{a-1}}t_{a-1}^{h_a-1}
  \cdots\int_0^\infty e^{-x_2t_2}t_2^{h_3-1}\\\times\int_0^\infty e^{-x_1u(\delta+t_2+\cdots+t_a)}(\delta+t_2+\cdots+t_a)^{h_1+h_2-1}u^{h_2-1}(1+u)^{h_1-1}dudt_2\cdots dt_a
  \end{multlined}\notag\\
  &=\begin{multlined}[t]\frac{1}{\Gamma(h_3)\cdots\Gamma(h_{a+1})}\\\times\int_0^\infty e^{-x_at_a}t_a^{h_{a+1}-1}\int_0^\infty e^{-x_{a-1}t_{a-1}}t_{a-1}^{h_a-1}
  \cdots\int_0^\infty e^{-x_2t_2}t_2^{h_3-1}(\delta+t_2+\cdots+t_a)^{h_1+h_2-1}\\\times\Psi(h_2,h_1+h_2;x_1(\delta+t_2+\cdots+t_a))dt_2\cdots dt_a.
  \end{multlined}
  \label{label18}\end{align}
Recalling the well-known and beautiful property of $\Psi(b,c;x)$:
  \begin{equation}
    \Psi(b,c;x)=x^{1-c}\Psi(b-c+1,2-c;x)
  \end{equation}
  shown in \cite[6.5 (6)]{Bate}, we can show
  \begin{align}
  (\ref{label18})&= \begin{multlined}[t]\frac{x_1^{1-h_1-h_2}}{\Gamma(1-h_1)\Gamma(h_3)\cdots\Gamma(h_{a+1})}\int_0^\infty e^{-(x_a+x_1t_1)t_a}t_a^{h_{a+1}-1}\int_0^\infty e^{-(x_{a-1}+x_1t_1)t_{a-1}}t_{a-1}^{h_a-1}
  \\\times\int_0^\infty\cdots\int_0^\infty e^{-(x_2+x_1t_1)t_2}t_2^{h_3-1}\int_0^\infty e^{-\delta x_1t_1}t_1^{-h_1}(1+t_1)^{-h_2}dt_1\cdots dt_a
  \end{multlined}\notag\\
 &=\begin{multlined}[t]\frac{x_1^{1-h_1-h_2}}{\Gamma(1-h_1)}\int_0^\infty e^{-\delta x_1t_1}t_1^{-h_1}(1+t_1)^{-h_2}(x_2+x_1t_1)^{-h_3}\cdots(x_a+x_1t_1)^{-h_{a+1}}dt_1.
  \end{multlined}\label{label16}
  \end{align}
  Putting $\delta=1$, we see that the right-hand side of (\ref{label16}) is 
  \begin{align}
  &= \begin{multlined}[t]\frac{x_1^{1-h_1-h_2}x_2^{-h_3}\cdots x_a^{-h_{a+1}}}{2\pi i\Gamma(1-h_1)}\int_{\mathcal{M}}\Gamma(-s)x_1^s\\\times\int_0^\infty t_1^{s-h_1}(1+t_1)^{-h_2}\left(1+\left(\frac{x_1}{x_2}\right)t_1\right)^{-h_3}\cdots\left(1+\left(\frac{x_1}{x_a}\right)t_1\right)^{-h_{a+1}}dt_1ds.
  \end{multlined}\label{label1}
  \end{align}
  Here we used the formula $e^{-z}=\frac{1}{2\pi i}\int_{\mathcal{M}}\Gamma(-s)z^sds$.
  We define the integration contour $\mathcal{M}$ as the vertical line running from $c-i\infty$ to $c+i\infty$. In order to choose  $c$ so that all singularities of $\Gamma(-s)$ and $\Gamma(h_1+\cdots+h_{a+1}-s-1)$ lie to the right of the contour, we assume $\Re(h_1-1)<c<0$.
 Applying Lemma \ref{lem1} and summing over the poles of $\Gamma(-s)$ and $\Gamma(h_1+\cdots+h_{a+1}-s-1)$, we have
  \begin{align}
   (\ref{label1})&=\begin{multlined}[t]\frac{x_1^{1-h_1-h_2}x_2^{-h_3}\cdots x_a^{-h_{a+1}}}{2\pi i\Gamma(1-h_1)}\sum_{m_1,\ldots,m_{a-1}=0}^\infty\frac{(h_3)_{m_1}(1-\frac{x_1}{x_2})^{m_1}}{m_1!}\cdots\frac{(h_{a+1})_{m_{a-1}}(1-\frac{x_1}{x_a})^{m_{a-1}}}{m_{a-1}!}\\\times\int_{\mathcal{M}}\Gamma(-s)x_1^s\frac{\Gamma(s-h_1+m_1+\cdots+m_{a-1}+1)\Gamma(h_1+\cdots+h_{a+1}-s-1)}{\Gamma(h_2+h_3+\cdots+h_{a+1}+m_1+\cdots+m_{a-1})}ds\end{multlined}\notag\\
    &=\begin{multlined}[t]\frac{x_1^{1-h_1-h_2}x_2^{-h_3}\cdots x_a^{-h_{a+1}}}{\Gamma(1-h_1)}\sum_{m_1,\ldots,m_{a-1}=0}^\infty\frac{(h_3)_{m_1}(1-\frac{x_1}{x_2})^{m_1}}{m_1!}\cdots\frac{(h_{a+1})_{m_{a-1}}(1-\frac{x_1}{x_a})^{m_{a-1}}}{m_{a-1}!}\\\times\sum_{\ell=0}^\infty\frac{(-1)^\ell}{\ell!}\left(x_1^\ell\frac{\Gamma(\ell-h_1+m_1+\cdots+m_{a-1}+1)\Gamma(h_1+\cdots+h_{a+1}-\ell-1)}{\Gamma(h_2+h_3+\cdots+h_{a+1}+m_1+\cdots+m_{a-1})}
    \right.\\\left.
    +x_1^{h_1+\cdots+h_{a+1}-1+\ell}\frac{\Gamma(1-h_1-\cdots-h_{a+1}-\ell)\Gamma(h_2+h_3+\cdots+h_{a+1}+\ell+m_1+\cdots+m_{a-1})}{\Gamma(h_2+h_3+\cdots+h_{a+1}+m_1+\cdots+m_{a-1})}\right).\end{multlined}
 \label{label4} \end{align}
 By using the formula $\frac{1}{(1-x)_n}=(-1)^n(x)_{-n}$, we can show
  \begin{align}
  (\ref{label4})&=\begin{multlined}[t]\frac{x_1^{1-h_1-h_2}x_2^{-h_3}\cdots x_a^{-h_{a+1}}}{\Gamma(1-h_1)}\sum_{m_1,\ldots,m_{a-1}=0}^\infty\frac{(h_3)_{m_1}(1-\frac{x_1}{x_2})^{m_1}}{m_1!}\cdots\frac{(h_{a+1})_{m_{a-1}}(1-\frac{x_1}{x_a})^{m_{a-1}}}{m_{a-1}!}\\\times\sum_{\ell=0}^\infty\frac{1}{\ell!}\left(x_1^\ell\frac{\Gamma(1-h_1+m_1+\cdots+m_{a-1})\Gamma(h_1+\cdots+h_{a+1}-1)(1-h_1+m_1+\cdots+m_{a-1})_\ell}{(2-h_1-\cdots-h_{a+1})_\ell\Gamma(h_2+h_3+\cdots+h_{a+1}+m_1+\cdots+m_{a-1})}
    \right.\\\left.
    +x_1^{h_1+\cdots+h_{a+1}-1+\ell}\frac{\Gamma(1-h_1-\cdots-h_{a+1})(h_2+h_3+\cdots+h_{a+1}+m_1+\cdots+m_{a-1})_\ell}{(h_1+\cdots+h_{a+1})_\ell}\right)\end{multlined}\notag\\
    &=\begin{multlined}[t]\frac{x_1^{h_3+\cdots+h_{a+1}}x_2^{-h_3}\cdots x_a^{-h_{a+1}}}{\Gamma(1-h_1)}\sum_{m_1,\ldots,m_{a-1}=0}^\infty\frac{(h_3)_{m_1}(1-\frac{x_1}{x_2})^{m_1}}{m_1!}\cdots\frac{(h_{a+1})_{m_{a-1}}(1-\frac{x_1}{x_a})^{m_{a-1}}}{m_{a-1}!}\\\times
    \Gamma(1-h_1+m_1+\cdots+m_{a-1})\\\times\left(x_1^{1-h_1-\cdots-h_{a+1}}\frac{\Gamma(h_1+\cdots+h_{a+1}-1)}{\Gamma(h_2+\cdots+h_{a+1}+m_1+\cdots+m_{a-1})}{}_1F_1\left[\begin{aligned}
      1-&h_1+m_1+\cdots+m_{a-1}\\&2-h_1-\cdots-h_{a+1}
    \end{aligned};x_1\right] \right.\\\left.+\frac{\Gamma(1-h_1-\cdots-h_{a+1})}{\Gamma(1-h_1+m_1+\cdots+m_{a-1})}{}_1F_1\left[\begin{aligned}
      h_2+\cdots+&h_{a+1}+m_1+\cdots+m_{a-1}\\&h_1+\cdots+h_{a+1}
    \end{aligned};x_1\right] \right).\end{multlined}\label{label10}
  \end{align}
 The series representation 
  \begin{equation}
    \Psi(b,c,;x)=\frac{\Gamma(1-c)}{\Gamma(b-c+1)}{}_1F_1\left[\begin{aligned}
      &b\\&c
    \end{aligned};x\right]+x^{1-c}\frac{\Gamma(c-1)}{\Gamma(b)}{}_1F_1\left[\begin{aligned}
      b&-c+1\\&2-c
    \end{aligned};x\right]\label{label13}
  \end{equation}
  holds. (See \cite[6.5 (7)]{Bate}.) Here we define
  \begin{equation}
    {}_1F_1\left[\begin{aligned}
      &b\\&c
    \end{aligned};x\right]\coloneqq\sum_{m=0}^\infty\frac{(b)_m}{m!(c)_m}x^m.\notag
  \end{equation}
  Applying equation (\ref{label13}) for $b=h_2+\cdots+h_{a+1}+m_1+\cdots+m_{a-1}$, $c=h_1+\cdots+h_{a+1}$ and by equation (\ref{label10}) we arrive at the desired assertion.
  \end{proof}
   \begin{lem}\label{lem4}
    We have the asymptotic expansion of $\Psi_a(h_1,\ldots,h_a;x_1,\ldots,x_a)$ such as 
    \begin{align*}
&\Psi_a(h_1,\ldots,h_{a+1};x_1,\ldots,x_a)\\&=\begin{multlined}[t]x_1^{-h_2}\cdots x_a^{-h_{a+1}}\sum_{n=0}^{N-1}\left(\sum_{k_1+\cdots+k_a=n}\frac{(-x_1)^n(h_2)_{k_1}\cdots(h_{a+1})_{k_a}}{x_1^{k_1}\cdots x_a^{k_a}k_1!\cdots k_a!}\right)\frac{(1-h_1)_n}{x_1^n}\\+\rho_N(h_1,\ldots,h_{a+1};x_1,\ldots,x_a)\end{multlined}
    \end{align*}
    where, 
    \begin{equation}
    \rho_N(h_1,\ldots,h_{a+1};x_1,\ldots,x_a)\coloneqq  x_1^{1-h_1-h_2}x_2^{-h_3}\cdots x_a^{-h_{a+1}}\frac{1}{\Gamma(1-h_1)}\int_0^\infty e^{-x_1t}t^{-h_1}\int_0^t\frac{f(\eta)(t-\eta)^{N-1}}{(N-1)!}d\eta dt\notag
    \end{equation}
    and
    \begin{equation}
    \begin{multlined}[t]
      f(\eta;h_1,\ldots,h_{a+1};x_1,\ldots,x_a)\coloneqq\sum_{k_1+\cdots+k_a=N}\frac{N!}{k_1!\cdots k_a!}(-1)^{k_1}(h_2)_{k_1}(1+\eta)^{-h_2-k_1}\\\times\prod_{i=2}^a\left(-\left(\frac{x_1}{x_i}\right)\right)^{k_i}(h_{i+1})_{k_i}\left(1+\left(\frac{x_1}{x_i}\right)\eta\right)^{-h_{i+1}-k_i}.
       \end{multlined}\notag
    \end{equation}
  \end{lem}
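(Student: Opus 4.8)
The plan is to reduce everything to the single--integral representation of $\Psi_a$ already produced in the previous lemma and then apply Taylor's theorem with integral remainder inside that integral. Concretely, setting $\delta=1$ in~(\ref{label16}) and factoring $x_i^{-h_{i+1}}$ out of $(x_i+x_1t)^{-h_{i+1}}=x_i^{-h_{i+1}}\bigl(1+(x_1/x_i)t\bigr)^{-h_{i+1}}$ for $2\le i\le a$ gives
\begin{equation}
\Psi_a(h_1,\ldots,h_{a+1};x_1,\ldots,x_a)=\frac{x_1^{1-h_1-h_2}x_2^{-h_3}\cdots x_a^{-h_{a+1}}}{\Gamma(1-h_1)}\int_0^\infty e^{-x_1t}\,t^{-h_1}g(t)\,dt,\notag
\end{equation}
where $g(t)\coloneqq(1+t)^{-h_2}\prod_{i=2}^{a}\bigl(1+(x_1/x_i)t\bigr)^{-h_{i+1}}$ is smooth on $[0,\infty)$, its branch points lying off the positive axis. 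For each integer $N\ge1$, Taylor's theorem with integral remainder at $t=0$ yields $g(t)=\sum_{n=0}^{N-1}\frac{g^{(n)}(0)}{n!}t^n+\int_0^t\frac{g^{(N)}(\eta)(t-\eta)^{N-1}}{(N-1)!}\,d\eta$.

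The second step is to evaluate the derivatives of $g$ by the general (multinomial) Leibniz rule applied to its $a$ factors. Since $\frac{d^{k}}{dt^{k}}(1+t)^{-h_2}=(-1)^{k}(h_2)_{k}(1+t)^{-h_2-k}$ and $\frac{d^{k_i}}{dt^{k_i}}\bigl(1+(x_1/x_i)t\bigr)^{-h_{i+1}}=\bigl(-x_1/x_i\bigr)^{k_i}(h_{i+1})_{k_i}\bigl(1+(x_1/x_i)t\bigr)^{-h_{i+1}-k_i}$, it is immediate that $g^{(N)}(\eta)$ is exactly the function $f(\eta;h_1,\ldots,h_{a+1};x_1,\ldots,x_a)$ of the statement, and that $\frac{g^{(n)}(0)}{n!}=\sum_{k_1+\cdots+k_a=n}\frac{(-x_1)^{n}(h_2)_{k_1}\cdots(h_{a+1})_{k_a}}{x_1^{k_1}\cdots x_a^{k_a}k_1!\cdots k_a!}$, where $(-x_1)^{n}/x_1^{k_1}$ absorbs the overall sign $(-1)^{k_1+\cdots+k_a}$ together with the chain--rule factors $x_1/x_i$.

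The third step is to put the expansion back into the integral and integrate term by term; the Taylor part being finite, this is harmless, and each monomial contributes $\int_0^\infty e^{-x_1t}t^{n-h_1}\,dt=\Gamma(n+1-h_1)x_1^{h_1-1-n}$. This is legitimate because $\Re(n+1-h_1)>0$ for every $n\ge0$ under the hypothesis $h_1<1$, and because $x_1$ lies in the sector $|\phi+\arg x_1|<\pi/2$ for which the defining integral of $\Psi_a$ converges. Using $\Gamma(n+1-h_1)/\Gamma(1-h_1)=(1-h_1)_n$ together with $x_1^{1-h_1-h_2}\cdot x_1^{h_1-1-n}=x_1^{-h_2-n}$ turns the main part into $x_1^{-h_2}x_2^{-h_3}\cdots x_a^{-h_{a+1}}\sum_{n=0}^{N-1}\bigl(\sum_{k_1+\cdots+k_a=n}\frac{(-x_1)^{n}(h_2)_{k_1}\cdots(h_{a+1})_{k_a}}{x_1^{k_1}\cdots x_a^{k_a}k_1!\cdots k_a!}\bigr)\frac{(1-h_1)_n}{x_1^{n}}$, which is exactly the asserted finite sum, while the remaining double integral reproduces $\rho_N$ verbatim.

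The main obstacle is, in fact, mild: the claimed formula is an \emph{exact} identity, not a genuine asymptotic statement, so no estimation of $\rho_N$ is needed at this stage and the exchange of a finite sum with an integral requires no justification. What does require care is the bookkeeping --- carrying the multinomial coefficients $N!/(k_1!\cdots k_a!)$, the signs, the Pochhammer symbols $(h_{i+1})_{k_i}$, and above all the powers of $x_1$ created by the chain--rule factors $x_1/x_i$ --- through both the derivative computation and the term--by--term integration, so that every index lands in precisely the shape of the statement. I expect that combinatorial matching, rather than any analytic point, to be the only real work.
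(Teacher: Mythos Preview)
Your proposal is correct and follows essentially the same approach as the paper. The paper's own proof is extremely terse---it simply records the Taylor expansion of $(1+t)^{-h_2}\prod_{i=2}^a(1+(x_1/x_i)t)^{-h_{i+1}}$ with integral remainder and then writes ``Applying the same method as in \cite{Bate} and using the equations (\ref{label16}) and (\ref{label17}), we have the desired equation''; you have spelled out exactly those omitted steps (the multinomial Leibniz computation of $g^{(n)}(0)$ and $g^{(N)}(\eta)$, the term-by-term integration via $\int_0^\infty e^{-x_1t}t^{n-h_1}\,dt=\Gamma(n+1-h_1)x_1^{h_1-1-n}$, and the bookkeeping of the $x_1$ powers), but the underlying argument is identical.
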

  \begin{proof}
  We have the Taylor series
    \begin{align}
      &(1+t)^{-h_2}\left(1+\left(\frac{x_1}{x_2}\right)t\right)^{-h_3}\cdots\left(1+\left(\frac{x_1}{x_a}\right)t\right)^{-h_{a+1}}\notag\\&=\sum_{n=0}^{N-1}\left(\sum_{k_1+\cdots+k_a=n}\frac{(-x_1)^n(h_2)_{k_1}\cdots(h_{a+1})_{k_a}}{x_1^{k_1}\cdots x_a^{k_a}k_1!\cdots k_a!}\right)t^n+\int_0^t\frac{f(\eta)(t-\eta)^{N-1}}{(N-1)!}d\eta.\label{label17}
      \end{align}
    Applying the same method as in \cite{Bate} and using the equations (\ref{label16}) and (\ref{label17}), we have the desired equation.
  \end{proof}
\section{Properties of $\mathscr{G}_r(s_1,\ldots,s_r)$}\label{sec3}
It is easy to see that
\begin{equation}
  \zeta_{EZ,r}(s_1,\ldots,s_r)=\frac{1}{\Gamma(s_1)\cdots\Gamma(s_r)}\int_0^\infty\frac{t_r^{s_r-1}}{e^{t_r}-1}\int_0^\infty \cdots\int_0^\infty\frac{t_2^{s_2-1}}{e^{t_2+\cdots+t_r}-1}\int_0^\infty\frac{t_1^{s_1-1}}{e^{t_1+t_2+\cdots+t_r}-1}dt_1dt_2\cdots dt_r.\label{label2}
\end{equation}
The right-hand side is convergent when $\Re(s_{r-k+1}+\cdots+s_r)>k$ and $\Re(s_k)>0$ for $1\le k\le r$.

Let
\[h(z)\coloneqq\frac{1}{e^z-1}-\frac{1}{z}.\]
Applying (\ref{div}) and (\ref{label2}), we have 
\begin{equation*}
 \mathscr{G}_r(s_1,\ldots,s_r)=\frac{1}{\Gamma(s_1)\cdots\Gamma(s_r)}\int_0^\infty\frac{t_r^{s_r-1}}{e^{t_r}-1}\int_0^\infty \cdots\int_0^\infty\frac{t_2^{s_2-1}}{e^{t_2+\cdots+t_r}-1}\int_0^\infty t_1^{s_1-1}h(t_1+\cdots+t_r)dt_1dt_2\cdots dt_r.
\end{equation*}
 Let $\mathcal{C}$ be the contour which consists of the half-line on the positive real axis from
infinity to a small positive number, a small circle counterclockwise round the origin,
and the other half-line on the positive real axis back to infinity. Deforming the path
to the contour $\mathcal{C}$, we have
\begin{align*}
 &\mathscr{G}_r(s_1,\ldots,s_r)\\&=\frac{1}{\Gamma(s_1)\cdots\Gamma(s_r)(e^{2\pi is_1}-1)}\int_0^\infty\frac{t_r^{s_r-1}}{e^{t_r}-1}\int_0^\infty \cdots\int_0^\infty\frac{t_2^{s_2-1}}{e^{t_2+\cdots+t_r}-1}\int_\mathcal{C} t_1^{s_1-1}h(t_1+\cdots+t_r)dt_1dt_2\cdots dt_r.
 \end{align*}
 In \cite{KM}, the following estimate was proved:
 \[h(t_1+\cdots+t_r)=O(e^{-K|t_1+\cdots+t_r|}+(|t_1+\cdots+t_r|+1)^{-1}).\]
 The above estimate holds with a positive absolute constant $K$. Uniformly for any $x,y\in \mathcal{C}\cup[0,\infty)$, we can show
\begin{equation}
  \int_{\mathcal{C}}t_r^{s_1-1}h(t_1+\cdots+t_r)dt_r=O(1)\label{est1}
\end{equation}
 when $\Re s_1<1$. Assuming $\Re s>1$, we have
 \begin{equation}
   \int_0^\infty\frac{x^{s-1}}{e^x-1}dx=O(1).\label{est2}
 \end{equation}
 Finally, by applying (\ref{est1}) and (\ref{est2}),
  \[\mathscr{G}_r(s_1,\ldots,s_r)\ll1\]
  holds in the region $\Re s_1<1$ and $\Re s_k>1$ for $2\le k\le r$.
  \begin{thm}In the region $\Re s_1<0$ and $\Re s_k>1$ for $2\le k\le r$, we have 
    \begin{align}
      \mathscr{G}_r(s_1,\ldots,s_r)&=\begin{multlined}[t]\frac{(-1)^{s_1-1}\Gamma(1-s_1)}{\Gamma(s_2)\cdots\Gamma(s_r)}\sum_{n\not=0}\int_0^\infty\frac{t_r^{s_r-1}}{e^{t_r}-1}\int_0^\infty \cdots\int_0^\infty\frac{t_3^{s_3-1}}{e^{t_3+\cdots+t_r}-1}\\\times\int_0^\infty\frac{t_2^{s_2-1}}{e^{t_2+\cdots+t_r}-1}(-t_2-t_3\cdots-t_r+2\pi in)^{s_1-1}dt_2\cdots dt_r.\end{multlined}
    \end{align}
  \end{thm}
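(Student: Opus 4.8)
The plan is to evaluate, by residue calculus, the inner Hankel-type contour integral appearing in the representation of $\mathscr{G}_r$ derived just above the statement, and then to simplify the resulting constant by the reflection formula. Throughout I would fix $t_2,\ldots,t_r>0$, abbreviate $T\coloneqq t_2+\cdots+t_r>0$, and study
\[
J(T)\coloneqq\int_{\mathcal{C}}t_1^{s_1-1}h(t_1+T)\,dt_1 ,
\]
so that the formula in the text reads $\mathscr{G}_r(s_1,\ldots,s_r)=\dfrac{1}{\Gamma(s_1)\cdots\Gamma(s_r)(e^{2\pi is_1}-1)}\displaystyle\int_0^\infty\frac{t_r^{s_r-1}}{e^{t_r}-1}\int_0^\infty\cdots\int_0^\infty\frac{t_2^{s_2-1}}{e^{t_2+\cdots+t_r}-1}\,J(T)\,dt_2\cdots dt_r$.

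Since $h(z)=\frac{1}{e^{z}-1}-\frac1z$ is holomorphic at $z=0$ and has a simple pole of residue $1$ at each $z=2\pi in$ with $n\in\mathbb{Z}\setminus\{0\}$, while the term $\frac{1}{t_1+T}$ has its only pole at $t_1=-T$, which differs from every $2\pi in-T$ because $T>0$, the integrand $t_1\mapsto t_1^{s_1-1}h(t_1+T)$ is meromorphic on the plane slit along $[0,\infty)$ with simple poles precisely at $t_1=2\pi in-T$ $(n\neq0)$, the residue there being $(2\pi in-T)^{s_1-1}$ taken in the branch $\arg t_1\in(0,2\pi)$ inherited from $\mathcal{C}$. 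I would then push $\mathcal{C}$ outward onto circles $|t_1|=R$ with $R\to\infty$ chosen (depending on $T$) to stay bounded away from every pole $2\pi in-T$; on such circles $1/(e^{t_1+T}-1)$, hence $h(t_1+T)$, is $O(1)$ by the standard periodicity estimate for $1/(e^{z}-1)$, while $t_1^{s_1-1}=O(R^{\Re s_1-1})$ because $\Re s_1<0$, so the circle contribution is $O(R^{\Re s_1})\to0$. Keeping track of the orientation of $\mathcal{C}$ around the slit, the residue theorem gives
\[
J(T)=-2\pi i\sum_{n\neq0}(2\pi in-T)^{s_1-1},
\]
and since the right side no longer contains $R$, this holds for every fixed $T>0$.

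Next I would substitute this expression for $J(T)$ back and interchange the sum over $n$ with the remaining $(r-1)$-fold integral. This is legitimate because $|(2\pi in-T)^{s_1-1}|=|2\pi in-T|^{\Re s_1-1}\le(2\pi|n|)^{\Re s_1-1}$ uniformly in $T\ge0$ with $\sum_{n\neq0}(2\pi|n|)^{\Re s_1-1}<\infty$ (here $\Re s_1<0$ is used), while $\int_0^\infty\frac{t_r^{s_r-1}}{e^{t_r}-1}\int_0^\infty\cdots\int_0^\infty\frac{t_2^{s_2-1}}{e^{t_2+\cdots+t_r}-1}\,dt_2\cdots dt_r$ converges absolutely because $\Re s_k>1$ for $2\le k\le r$. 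It then remains to collapse the constant $\dfrac{-2\pi i}{\Gamma(s_1)(e^{2\pi is_1}-1)}$, which via $\Gamma(s_1)\Gamma(1-s_1)=\pi/\sin(\pi s_1)$ together with $e^{2\pi is_1}-1=2ie^{\pi is_1}\sin(\pi s_1)$ equals $-e^{-\pi is_1}\Gamma(1-s_1)=e^{-\pi i(s_1-1)}\Gamma(1-s_1)=(-1)^{s_1-1}\Gamma(1-s_1)$. Since $(2\pi in-T)^{s_1-1}=(-t_2-\cdots-t_r+2\pi in)^{s_1-1}$, this is exactly the asserted identity.

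I expect the main obstacle to be the rigorous justification of the contour shift: one must produce the $O(1)$ bound for $h(t_1+T)$ on the expanding circles by exhibiting radii $R\to\infty$ at which the circle stays a fixed distance from all poles $2\pi in-T$, and it is cleanest to perform the residue computation of $J(T)$ for each fixed $T$ before integrating, so that the outer integration only ever meets the series $\sum_{n\neq0}(2\pi in-T)^{s_1-1}$, whose domination $(2\pi|n|)^{\Re s_1-1}$ is uniform in $T$. A secondary point requiring care is the bookkeeping of the orientation of $\mathcal{C}$ and of the branch of $t_1^{s_1-1}$, so that simultaneously the overall constant comes out as $(-1)^{s_1-1}\Gamma(1-s_1)$ and the branch of $(-t_2-\cdots-t_r+2\pi in)^{s_1-1}$ in the statement agrees with the one propagated outward from the contour.
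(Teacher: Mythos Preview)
Your proposal is correct and follows essentially the same residue-theoretic strategy as the paper's proof. The only difference is organizational: the paper truncates the outer integrals to $[0,R/(2r)]$ and uses a large circle of radius $R=2\pi(N+\tfrac12)$ centred at $-(t_2+\cdots+t_r)$ (so that $t_1+T$ traverses a circle of that radius about the origin, automatically avoiding the poles of $1/(e^z-1)$), then lets $R\to\infty$ in all pieces simultaneously, whereas you fix $T$ first, evaluate $J(T)$ completely by residues, and afterwards justify the interchange of $\sum_{n\neq0}$ with the outer $(r-1)$-fold integral via the uniform domination $(2\pi|n|)^{\Re s_1-1}$.
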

  \begin{proof}
  By the residue theorem, we have
  \begin{align}
    &\begin{multlined}[t]
      \int_0^{\frac{R}{2r}}\frac{t_r^{s_r-1}}{e^{t_r}-1}\int_0^{\frac{R}{2r}} \cdots\int_0^{\frac{R}{2r}}\frac{t_2^{s_2-1}}{e^{t_2+\cdots+t_r}-1}\int_{\mathcal{C}_R} t_1^{s_1-1}h(t_1+\cdots+t_r)dt_1dt_2\cdots dt_r\\
      +\int_0^{\frac{R}{2r}}\frac{t_r^{s_r-1}}{e^{t_r}-1}\int_0^{\frac{R}{2r}}\cdots\int_0^{\frac{R}{2r}}\frac{t_2^{s_2-1}}{e^{t_2+\cdots+t_r}-1}\int_{\mathcal{D}_R} t_1^{s_1-1}h(t_1+\cdots+t_r)dt_1dt_2\cdots dt_r
    \end{multlined}\notag\\
    &=-2\pi i\int_0^{\frac{R}{2r}}\frac{t_r^{s_r-1}}{e^{t_r}-1}\int_0^{\frac{R}{2r}}\cdots\int_0^{\frac{R}{2r}}\frac{t_2^{s_2-1}}{e^{t_2+\cdots+t_r}-1}\sum_{|n|\le N,n\not=0}(-t_2-\cdots-t_r+2\pi in)^{s_1-1}dt_2\cdots dt_r\label{integral}
  \end{align}
where $R=2\pi(N+\frac{1}{2})$ for a sufficiently large positive $N$.
    The contour $\mathcal{C}_R$ consists of the half-line on the positive real axis from $-(t_2+\cdots+t_r)+R$ to a small positive number, a small circle counterclockwise around the origin, and another half-line on the positive real axis back to $-(t_2+\cdots+t_r)+R$.
The contour $\mathcal{D}_R$ consists of a circle of radius $R$ centered at $-(t_2+\cdots+t_r)$, traversed clockwise once.
For the first term on the left-hand side of (\ref{integral}), the following  
\begin{multline}
   \int_0^{\frac{R}{2r}}\frac{t_r^{s_r-1}}{e^{t_r}-1}\int_0^{\frac{R}{2r}} \cdots\int_0^{\frac{R}{2r}}\frac{t_2^{s_2-1}}{e^{t_2+\cdots+t_r}-1}\int_{\mathcal{C}_R} t_1^{s_1-1}h(t_1+\cdots+t_r)dt_1dt_2\cdots dt_r\\
   \rightarrow \int_0^\infty \frac{t_r^{s_r-1}}{e^{t_r}-1}\int_0^\infty \cdots\int_0^\infty\frac{t_2^{s_2-1}}{e^{t_2+\cdots+t_r}-1}\int_\mathcal{C} t_1^{s_1-1}h(t_1+\cdots+t_r)dt_1dt_2\cdots dt_r\quad(R\rightarrow\infty)
\end{multline}
holds in the region $\Re s_1<1$ and $\Re s_k>1$ for $2\le k\le r$.
The second term on the left-hand side of (\ref{integral}) can be estimate as
\begin{multline}
  \int_0^{\frac{R}{2r}}\frac{t_r^{s_r-1}}{e^{t_r}-1}\int_0^{\frac{R}{2r}}\cdots\int_0^{\frac{R}{2r}}\frac{t_2^{s_2-1}}{e^{t_2+\cdots+t_r}-1}\int_{\mathcal{D}_R} t_1^{s_1-1}h(t_1+\cdots+t_r)dt_1dt_2\cdots dt_r\\\ll R^{\Re s_1}\rightarrow0\quad(R\rightarrow\infty)
\end{multline}
in the region $\Re s_1<0$ and $\Re s_k>1$ for $2\le k\le r$.
Lastly we consider the right-hand side of (\ref{integral}). In the region $\Re s_1<0$ and $\Re s_k>1$ for $2\le k\le r$, we obtain
\begin{multline}
  \int_0^\infty\frac{t_r^{s_r-1}}{e^{t_r}-1}\int_0^\infty\cdots\int_0^\infty\frac{t_2^{s_2-1}}{e^{t_2+\cdots+t_r}-1}\sum_{|n|\le N,n\not=0}(-t_2-\cdots-t_r+2\pi in)^{s_1-1}dt_2\cdots dt_r\\\ll\int_0^\infty\frac{t_r^{\Re s_r-1}}{e^{t_r}-1}\int_0^\infty\cdots\int_0^\infty\frac{t_2^{\Re s_2-1}}{e^{t_2+\cdots+t_r}-1}\sum_{n=1}^\infty|-t_2-\cdots-t_r+2\pi in|^{\Re s_1-1}dt_2\cdots dt_r\\\ll\int_0^\infty\frac{t_r^{\Re s_r-1}}{e^{t_r}-1}\int_0^\infty\cdots\int_0^\infty\frac{t_2^{\Re s_2-1}}{e^{t_2+\cdots+t_r}-1}dt_2\cdots dt_r\sum_{n=1}^\infty n^{\Re s_1-1}\ll1
\end{multline}
by (\ref{est2}). Hence we have
\begin{multline}
  -2\pi i\int_0^\infty\frac{t_r^{s_r-1}}{e^{t_r}-1}\int_0^\infty\cdots\int_0^\infty\frac{t_2^{s_2-1}}{e^{t_2+\cdots+t_r}-1}\sum_{|n|\le N,n\not=0}(-t_2-\cdots-t_r+2\pi in)^{s_1-1}dt_2\cdots dt_r\\\rightarrow-2\pi i\int_0^\infty\frac{t_r^{s_r-1}}{e^{t_r}-1}\int_0^\infty\cdots\int_0^\infty\frac{t_2^{s_2-1}}{e^{t_2+\cdots+t_r}-1}\sum_{n\not=0}(-t_2-\cdots-t_r+2\pi in)^{s_1-1}dt_2\cdots dt_r\quad(R\rightarrow\infty)
\end{multline}
in the region $\Re s_1<0$ and $\Re s_k>1$ for $2\le k\le r$.
Hence applying the formula
\[\frac{1}{\Gamma(s_1)(e^{2\pi is_1}-1)}=\frac{(-1)^{s_1}\Gamma(1-s_1)}{2\pi i},\]
we complete the proof.
  \end{proof}
  \begin{thm}\label{Theorem3}
    In the region $\Re s_1<0$ and $\Re s_k>1$ for $2\le k\le r$, we have
    \begin{align}
      \mathscr{G}_r(s_1,\ldots,s_r)&=\begin{multlined}[t](2\pi)^{s_1+\cdots+s_r-1}\Gamma(1-s_1)\{e^{\frac{\pi i(s_1+\cdots+s_r-1)}{2}}\mathscr{F}_+^r(s_1,\ldots,s_r)+e^{\frac{-\pi i(s_1+\cdots+s_r-1)}{2}}\mathscr{F}_-^r(s_1,\ldots,s_r)\}.\end{multlined}
    \end{align}
  \end{thm}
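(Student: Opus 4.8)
The plan is to start from the integral representation obtained in the previous theorem,
\[
\mathscr{G}_r(s_1,\ldots,s_r)=\frac{(-1)^{s_1-1}\Gamma(1-s_1)}{\Gamma(s_2)\cdots\Gamma(s_r)}\sum_{n\neq0}\int_{(0,\infty)^{r-1}}\prod_{j=2}^{r}\frac{t_j^{s_j-1}}{e^{t_j+\cdots+t_r}-1}\,\bigl(2\pi in-t_2-\cdots-t_r\bigr)^{s_1-1}\,dt_2\cdots dt_r,
\]
and to reshape the integrand into the defining integral of $\Psi_{r-1}$. First I expand each factor as $(e^{t_j+\cdots+t_r}-1)^{-1}=\sum_{\nu_j\ge1}e^{-\nu_j(t_j+\cdots+t_r)}$; since $\Re s_1<0$ and $\Re s_k>1$ for $2\le k\le r$ the integral of the absolute values is finite, so Tonelli's theorem permits moving all the sums outside the integral. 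Putting $k_i=\nu_{i+1}$ the combined exponential becomes $\exp\!\bigl(-\sum_{\ell=2}^{r}(k_1+\cdots+k_{\ell-1})t_\ell\bigr)$, so $\mathscr{G}_r$ is written as a multiple series indexed by $n\ne0$ and $k_1,\ldots,k_{r-1}\ge1$.

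Next I split $\sum_{n\ne0}=\sum_{n\ge1}+\sum_{n\le-1}$ and in each block rescale $t_\ell=2\pi|n|u_\ell$; this pulls out a factor $(2\pi|n|)^{\textup{wt}(\bm s)-1}$ and replaces $2\pi in-t_2-\cdots-t_r$ by $2\pi|n|\bigl(\operatorname{sgn}(n)i-u_2-\cdots-u_r\bigr)$. The resulting integrand depends on $(n;k_1,\ldots,k_{r-1})$ only through $nk_1,\,n(k_1+k_2),\,\ldots,\,n(k_1+\cdots+k_{r-1})$, so the reindexing $K_i=nk_i$ identifies the index set with the set of pairs $\bigl((K_1,\ldots,K_{r-1}),n\bigr)$, $n\mid\gcd(K_1,\ldots,K_{r-1})$; summing over that common divisor produces $\sigma_{\textup{wt}(\bm s)-1}(K_1,\ldots,K_{r-1})$. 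Thus each block equals $(2\pi)^{\textup{wt}(\bm s)-1}$ times a scalar times $\sum_{K_1,\ldots,K_{r-1}\ge1}\sigma_{\textup{wt}(\bm s)-1}(K_1,\ldots,K_{r-1})$ against an integral in which $u_\ell$ is coupled to $2\pi(K_1+\cdots+K_{\ell-1})$.

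Finally I rotate the contour by $u_\ell=-i\operatorname{sgn}(n)\tau_{\ell-1}$, which sends $\operatorname{sgn}(n)i-u_2-\cdots-u_r$ to $\operatorname{sgn}(n)i\,(1+\tau_1+\cdots+\tau_{r-1})$ and $e^{-2\pi(K_1+\cdots+K_{\ell-1})u_\ell}$ to $e^{-x_{\ell-1}\tau_{\ell-1}}$ with $x_m=-2\pi i\operatorname{sgn}(n)(K_1+\cdots+K_m)$; along the ray $\phi=\operatorname{sgn}(n)\pi/2$, which is admissible because $|\phi+\arg x_m|<\pi/2$, this is exactly $\Gamma(s_2)\cdots\Gamma(s_r)\,\Psi_{r-1}(s_1,\ldots,s_r;x_1,\ldots,x_{r-1};1)$. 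Summing over $K_1,\ldots,K_{r-1}$ therefore turns the $n\ge1$ block into $\mathscr{F}_-^r$ and the $n\le-1$ block into $\mathscr{F}_+^r$, and multiplying out the scalars — $(-1)^{s_1-1}$, the phase $(\operatorname{sgn}(n)i)^{s_1-1}$ coming from $(\operatorname{sgn}(n)i-\cdots)^{s_1-1}$, and the factor $(-i\operatorname{sgn}(n))^{s_2+\cdots+s_r}$ produced by the Jacobian together with the powers $u_\ell^{s_\ell-1}$ — collapses the coefficient to $(2\pi)^{\textup{wt}(\bm s)-1}\Gamma(1-s_1)e^{-i\pi(\textup{wt}(\bm s)-1)/2}$ for $\mathscr{F}_-^r$ and to $(2\pi)^{\textup{wt}(\bm s)-1}\Gamma(1-s_1)e^{+i\pi(\textup{wt}(\bm s)-1)/2}$ for $\mathscr{F}_+^r$, which is the assertion.

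I expect the main obstacle to be the branch bookkeeping of the complex powers together with the legitimacy of the contour rotation. One must fix the branch of $(2\pi in-t_2-\cdots-t_r)^{s_1-1}$ inherited from the Hankel contour $\mathcal{C}$ of the previous theorem: for $n>0$ it is the principal one, but for $n<0$ the path winds almost fully around the origin before reaching those poles, so the value there differs from the principal branch by the factor $e^{2\pi i(s_1-1)}$ — and it is precisely this extra factor that makes the $n\le-1$ coefficient the mirror image of the $n\ge1$ one, producing the symmetric pair $e^{\pm i\pi(\textup{wt}(\bm s)-1)/2}$. One also checks that the rotation $u_\ell\mapsto-i\operatorname{sgn}(n)\tau_{\ell-1}$ crosses no branch point of $(1+\tau_1+\cdots+\tau_{r-1})^{s_1-1}$ (the locus $\tau_1+\cdots+\tau_{r-1}=-1$ lies off the rotated rays) and that all interchanges of summation and integration are valid; this, and the absolute convergence of $\mathscr{F}_\pm^r$ as a by‑product of the computation, rests only on the standing hypotheses $\Re s_1<0$ and $\Re s_k>1$ for $2\le k\le r$.
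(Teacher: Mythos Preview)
Your proposal is correct and follows essentially the same route as the paper's proof: expand each $(e^{t_j+\cdots+t_r}-1)^{-1}$ as a geometric series, rescale the $t_\ell$ by $2\pi n$, regroup the double sum over $(n,k_1,\ldots,k_{r-1})$ so that summing over the common divisor $n$ produces $\sigma_{\mathrm{wt}(\bm s)-1}$, and recognize the remaining integral as $\Psi_{r-1}(s_1,\ldots,s_r;\,\cdot\,;1)$. The paper carries this out in one stroke via the complex substitution $t_j=2\pi i n\,\eta_j$ (combining your real rescaling and rotation) and is terse about the branch choices you single out at the end, but the underlying argument is the same.
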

  \begin{rem}
    When $r=2$, Theorem \ref{Theorem3} is a special case of \cite[(2.14)]{Mat1}.
  \end{rem}
  \begin{proof}
By substituting $t_j=2\pi in\eta_j$, we can show
\begin{align*}
&\mathscr{G}_r(s_1,\ldots,s_r)\\
   &=\begin{multlined}[t]\frac{(2\pi i)^{s_1+\cdots+s_r-1}\Gamma(1-s_1)}{\Gamma(s_2)\cdots\Gamma(s_r)}\sum_{n=1}^\infty n^{s_1+\cdots+s_r-1}\sum_{m_1,\ldots,m_{r-1}=1}^\infty\int_0^{i\infty}e^{-2\pi inm_{r-1}\eta_r}\eta_r^{s_r-1}\\\times\int_0^{i\infty}\cdots\int_0^{i\infty}e^{-2\pi inm_2(\eta_3+\cdots+\eta_r)}\eta_3^{s_3-1}\int_0^{i\infty} e^{-2\pi inm_1(\eta_2+\cdots+\eta_r)}\eta_2^{s_2-1}(1+\eta_2+\eta_3+\cdots+\eta_r)^{s_1-1}d\eta_2\cdots d\eta_r\\
   +\frac{(-2\pi i)^{s_1+\cdots+s_r-1}\Gamma(1-s_1)}{\Gamma(s_2)\cdots\Gamma(s_r)}\sum_{n=1}^\infty n^{s_1+\cdots+s_r-1}\sum_{m_1,\ldots,m_{r-1}=1}^\infty\int_0^{-i\infty}e^{2\pi inm_{r-1}\eta_r}\eta_r^{s_r-1}\\\times\int_0^{-i\infty}\cdots\int_0^{-i\infty}e^{2\pi inm_2(\eta_3+\cdots+\eta_r)}\eta_3^{s_3-1}\int_0^{-i\infty} e^{2\pi inm_1(\eta_2+\cdots+\eta_r)}\eta_2^{s_2-1}(1+\eta_2+\eta_3+\cdots+\eta_r)^{s_1-1}d\eta_2\cdots d\eta_r
   \end{multlined}\\
    &=\begin{multlined}[t]\frac{(2\pi i)^{s_1+\cdots+s_r-1}\Gamma(1-s_1)}{\Gamma(s_2)\cdots\Gamma(s_r)}\sum_{n=1}^\infty n^{s_1+\cdots+s_r-1}\sum_{m_1,\ldots,m_{r-1}=1}^\infty\int_0^{i\infty} e^{-2\pi in(m_1+\cdots+m_{r-1})\eta_r}\eta_r^{s_r-1}\\\times\int_0^{i\infty }\cdots\int_0^{i\infty} e^{-2\pi in(m_1+m_2)\eta_3}\eta_3^{s_3-1}\int_0^{i\infty} e^{-2\pi inm_1\eta_2}\eta_2^{s_2-1}(1+\eta_2+\eta_3+\cdots+\eta_r)^{s_1-1}d\eta_2\cdots d\eta_r\\
    +\frac{(-2\pi i)^{s_1+\cdots+s_r-1}\Gamma(1-s_1)}{\Gamma(s_2)\cdots\Gamma(s_r)}\sum_{n=1}^\infty n^{s_1+\cdots+s_r-1}\sum_{m_1,\ldots,m_{r-1}=1}^\infty\int_0^{-i\infty} e^{2\pi in(m_1+\cdots+m_{r-1})\eta_r}\eta_r^{s_r-1}\\\times\int_0^{-i\infty }\cdots\int_0^{-i\infty} e^{2\pi in(m_1+m_2)\eta_3}\eta_3^{s_3-1}\int_0^{-i\infty} e^{2\pi inm_1\eta_2}\eta_2^{s_2-1}(1+\eta_2+\eta_3+\cdots+\eta_r)^{s_1-1}d\eta_2\cdots d\eta_r\end{multlined}\\
    &=\begin{multlined}[t](2\pi)^{s_1+\cdots+s_r-1}e^{\frac{\pi i(s_1+\cdots+s_r-1)}{2}}\Gamma(1-s_1)\mathscr{F}_+^r(s_1,\ldots,s_r)+(2\pi)^{s_1+\cdots+s_r-1}e^{\frac{\pi i(1-s_1-\cdots-s_r)}{2}}\Gamma(1-s_1)\mathscr{F}_-^r(s_1,\ldots,s_r).\end{multlined}
\end{align*}
This complete the proof.
\end{proof}
 \section{Analytic continuation of $\mathscr{F}_\pm^r(s_1,\ldots,s_r)$ and $\mathscr{G}_r(s_1,\ldots,s_r)$}
  Here we introduce a new function. We put
  \begin{equation}
    \zeta_{EZ,r}(s_1,\ldots,s_r;f)\coloneqq\sum_{m_1,\ldots,m_r=1}^\infty\frac{f(m_1,\ldots,m_r)}{m_1^{s_1}(m_1+m_2)^{s_2}\cdots(m_1+\cdots+m_r)^{s_r}},\notag
  \end{equation}
  where $f$ is a multi-variable arithmetic function.
  \begin{lem}\label{Lemma 4.1}
    Let $a$ be a complex variable. Then, in the region $\Re s_1+\Re s_2+\cdots+\Re s_r>\Re a+1$ and $\Re(s_{r-k+1}+\cdots+s_r)>k$ for $1\le k\le r$, we have
    \begin{equation}
      \zeta_{EZ,1}(\textup{wt}(\bm{s})-a)\zeta_{EZ,r}(s_1,\ldots,s_r)=\zeta_{EZ,r}(s_1,\ldots,s_r;\sigma_a).\notag
    \end{equation}
  \end{lem}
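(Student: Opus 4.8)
The plan is to expand both factors on the left as absolutely convergent series and then reorganize the resulting double series by a scaling substitution. I would first write
\[
  \zeta_{EZ,1}(\textup{wt}(\bm{s})-a)=\sum_{d=1}^\infty d^{a-\textup{wt}(\bm{s})},\qquad
  \zeta_{EZ,r}(s_1,\ldots,s_r)=\sum_{n_1,\ldots,n_r=1}^\infty \frac{1}{n_1^{s_1}(n_1+n_2)^{s_2}\cdots(n_1+\cdots+n_r)^{s_r}}.
\]
Under the stated hypotheses, the conditions $\Re(s_{r-k+1}+\cdots+s_r)>k$ for $1\le k\le r$ put the second series in its region of absolute convergence, while $\Re s_1+\cdots+\Re s_r>\Re a+1$ is exactly $\Re(\textup{wt}(\bm{s})-a)>1$, which makes the first absolutely convergent. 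Hence the product is an absolutely convergent double series in $(d;n_1,\ldots,n_r)$, and I may rearrange it freely; securing this rearrangement is the only point that genuinely needs care, the rest being bookkeeping of exponents.

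Next, for each fixed $d$ I would substitute $m_i=d\,n_i$ for $1\le i\le r$. Since $n_1+\cdots+n_j=(m_1+\cdots+m_j)/d$ for every $j$, the general term of the product becomes
\[
  d^{a-\textup{wt}(\bm{s})}\cdot d^{s_1}m_1^{-s_1}\cdot d^{s_2}(m_1+m_2)^{-s_2}\cdots d^{s_r}(m_1+\cdots+m_r)^{-s_r}
  =\frac{d^{a}}{m_1^{s_1}(m_1+m_2)^{s_2}\cdots(m_1+\cdots+m_r)^{s_r}},
\]
the powers of $d$ produced by the substitution cancelling the factor $d^{-\textup{wt}(\bm{s})}$ precisely because $\textup{wt}(\bm{s})=s_1+\cdots+s_r$.

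Finally, I would interchange the order of summation so as to sum over $d$ first for a fixed tuple $(m_1,\ldots,m_r)$. As $(d;n_1,\ldots,n_r)$ with $m_i=d n_i$ ranges over all its values, $d$ runs over exactly those positive integers with $m_i/d\in\mathbb{Z}_{\ge1}$ for every $i$, that is, over the common divisors $d\mid m_1,\ldots,d\mid m_r$. Therefore $\sum_{d\mid m_1,\ldots,d\mid m_r}d^{a}=\sigma_a(m_1,\ldots,m_r)$, and the double series collapses to $\sum_{m_1,\ldots,m_r=1}^\infty\sigma_a(m_1,\ldots,m_r)\,m_1^{-s_1}(m_1+m_2)^{-s_2}\cdots(m_1+\cdots+m_r)^{-s_r}=\zeta_{EZ,r}(s_1,\ldots,s_r;\sigma_a)$, which is the asserted identity. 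The main obstacle, as noted above, is just verifying the absolute convergence that legitimizes the two interchanges; once that is in place the proof is a direct computation.
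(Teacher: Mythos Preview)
Your proof is correct and follows essentially the same approach as the paper: expand both factors as absolutely convergent series, scale via $m_i=d\,n_i$, and regroup by common divisors to produce $\sigma_a(m_1,\ldots,m_r)$. If anything, your write-up is more careful than the paper's, which compresses the substitution and regrouping into two displayed lines without elaborating on the rearrangement.
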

  \begin{proof}In the above region, $\zeta_{EZ,1}(\textup{wt}(\bm{s})-a)$ and $\zeta_{EZ,r}(s_1,\ldots,s_r)$ are absolutely convergent.
 We can show
    \begin{align*}
      \zeta_{EZ,1}(s-a)\zeta_{EZ,r}(s_1,\ldots,s_r)&=\sum_{n,m_1,\ldots,m_r=1}^\infty\frac{n^a}{(nm_1)^{s_1}(nm_1+nm_2)^{s_2}\cdots(nm_1+\cdots+nm_r)^{s_r}}\\
      &=\sum_{\ell_1,\ldots,\ell_r=1}^\infty\frac{\sum_{d|\ell_1,\ldots,d|\ell_r}d^a}{\ell_1^{s_1}(\ell_1+\ell_2)^{s_2}\cdots(\ell_1+\cdots+\ell_r)^{s_r}}.
    \end{align*}
    Here we obtain the proof.
  \end{proof}
 \begin{thm}\label{thm2}
The function $\mathscr{F}_\pm^r(s_1,\ldots,s_r)$ can be continued meromorphically to the whole $\mathfrak{A}_r$ space.
 \end{thm}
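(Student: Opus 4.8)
The plan is to continue $\mathscr{F}_\pm^r$ outward from the region $\Omega_0\coloneqq\{\Re s_1<0,\ \Re s_k>1\ (2\le k\le r)\}$, where by Theorem~\ref{Theorem3} the defining series converges absolutely, by feeding the asymptotic expansion of Lemma~\ref{lem4} into its summand. Fix $N\ge 1$ and apply Lemma~\ref{lem4} with $a=r-1$, $h_1=s_1$, $h_k=s_k$ $(2\le k\le r)$, and $x_j=\pm 2\pi i(k_1+\cdots+k_j)$; write $N_j\coloneqq k_1+\cdots+k_j$. The prefactor $x_1^{-s_2}\cdots x_{r-1}^{-s_r}$ and the factors $1/(x_1^{k_1}\cdots x_{r-1}^{k_{r-1}})$ combine so that the $n$-th term of the expansion is, up to an explicit constant built from $(\pm 2\pi i)^{-(s_2+\cdots+s_r)-n}$, signs, and Pochhammer symbols, a finite combination of the monomials $N_1^{-s_2-j_1}\cdots N_{r-1}^{-s_r-j_{r-1}}$ with $j_1+\cdots+j_{r-1}=n$. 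Summing against $\sigma_{s_1+\cdots+s_r-1}(k_1,\ldots,k_{r-1})$ over $k_1,\ldots,k_{r-1}\ge 1$ turns each such monomial into $\zeta_{EZ,r-1}(s_2+j_1,\ldots,s_r+j_{r-1};\sigma_{s_1+\cdots+s_r-1})$, which Lemma~\ref{Lemma 4.1} (applied in $r-1$ variables) identifies with $\zeta_{EZ,1}(1-s_1+n)\,\zeta_{EZ,r-1}(s_2+j_1,\ldots,s_r+j_{r-1})$. All these manipulations are legitimate on $\Omega_0$, and they yield there a decomposition
\[
\mathscr{F}_\pm^r(s_1,\ldots,s_r)=M_{\pm,N}(s_1,\ldots,s_r)+R_{\pm,N}(s_1,\ldots,s_r),
\]
where $M_{\pm,N}$ is a finite sum (over $n<N$ and compositions of $n$) of terms $c(s)\,\zeta_{EZ,1}(1-s_1+n)\,\zeta_{EZ,r-1}(s_2+j_1,\ldots,s_r+j_{r-1})$, and $R_{\pm,N}=\sum_{k_1,\ldots,k_{r-1}\ge 1}\sigma_{s_1+\cdots+s_r-1}(k_1,\ldots,k_{r-1})\,\rho_N(s_1,\ldots,s_r;\pm 2\pi iN_1,\ldots,\pm 2\pi iN_{r-1})$ is the matching remainder sum, with $\rho_N$ as in Lemma~\ref{lem4}.

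Now $M_{\pm,N}$ is meromorphic on all of $\mathbb{C}^r$: the coefficients $c(s)$ are entire, $\zeta_{EZ,1}(1-s_1+n)$ is meromorphic with a single pole at $s_1=n$, and $\zeta_{EZ,r-1}(s_2+j_1,\ldots,s_r+j_{r-1})$ is meromorphic on $\mathbb{C}^{r-1}$ — hence on $\mathbb{C}^r$, being independent of $s_1$ — by Theorem~\ref{thm3}. So it remains to control $R_{\pm,N}$. From the remainder formula of Lemma~\ref{lem4}, the bounds $|x_1/x_j|=N_1/N_j\le 1$, and the decay of $f(\eta)$ in $\eta$, one obtains
\[
\bigl|\rho_N(s_1,\ldots,s_r;\pm 2\pi iN_1,\ldots,\pm 2\pi iN_{r-1})\bigr|\ll_{s,N,r}N_1^{-\Re s_2-N}N_2^{-\Re s_3}\cdots N_{r-1}^{-\Re s_r},
\]
uniformly for $s$ in compacta; together with the elementary estimate $\sigma_{s_1+\cdots+s_r-1}(k_1,\ldots,k_{r-1})\ll_\varepsilon N_1^{\max(0,\Re(s_1+\cdots+s_r)-1)+\varepsilon}$ (valid since $\gcd(k_1,\ldots,k_{r-1})\le k_1\le N_1$), the series $R_{\pm,N}$ is majorised by an Euler--Zagier series in $r-1$ variables with exponents $(\Re s_2+N-\max(0,\Re(s_1+\cdots+s_r)-1)-\varepsilon,\ \Re s_3,\ldots,\Re s_r)$. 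Since $\Re s_k>1$ for $3\le k\le r$ throughout $\mathfrak{A}_r$, every Euler--Zagier convergence inequality except the outermost one holds automatically, and the outermost one holds once $N$ is large enough in terms of $\Re s_1$ and $\Re(s_3+\cdots+s_r)$.

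Putting this together: given a compact $K\subset\mathfrak{A}_r$, choose $N=N(K)$ so large that $R_{\pm,N}$ converges absolutely and locally uniformly on an open neighbourhood $\mathcal{U}$ of $K$ that also contains $\Omega_0$. Then $R_{\pm,N}$ is holomorphic on $\mathcal{U}$, so $\mathscr{F}_\pm^r=M_{\pm,N}+R_{\pm,N}$ furnishes a meromorphic continuation of $\mathscr{F}_\pm^r$ to $\mathcal{U}$, agreeing with the originally given $\mathscr{F}_\pm^r$ on $\Omega_0$. As $K$ exhausts $\mathfrak{A}_r$ and $\Omega_0$ is connected, uniqueness of analytic continuation glues these local continuations into one meromorphic function on all of $\mathfrak{A}_r$. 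The genuinely delicate point is the uniform remainder estimate above: one must bound $\rho_N$ with the correct \emph{joint} decay in $N_1,\ldots,N_{r-1}$ (not merely as $N_1\to\infty$), uniformly in $s$ over compacta and in the ratios $x_1/x_j\in(0,1]$, so that after attaching the divisor weight the remainder is dominated by a genuinely convergent Euler--Zagier series; the rest is bookkeeping with Lemmas~\ref{lem4} and~\ref{Lemma 4.1} and Theorem~\ref{thm3}.
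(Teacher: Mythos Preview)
Your proposal is correct and follows essentially the same route as the paper: split $\mathscr{F}_\pm^r$ via the asymptotic expansion of Lemma~\ref{lem4}, recognise the main terms through Lemma~\ref{Lemma 4.1} as products $\zeta_{EZ,1}(1-s_1+n)\,\zeta_{EZ,r-1}(s_2+j_1,\ldots,s_r+j_{r-1})$, and show that the $\rho_N$-remainder sum is holomorphic on a region that exhausts $\mathfrak{A}_r$ as $N\to\infty$. The only minor deviations are that the paper, rather than your pointwise divisor bound, applies Lemma~\ref{Lemma 4.1} a second time to factor the remainder sum exactly as $\zeta_{EZ,1}(1-\Re s_1+N)\,\zeta_{EZ,r-1}(\Re s_2+m_1,\ldots,\Re s_r+m_{r-1})$, and it works out explicitly (via the substitutions $\eta/t=\xi$, $\mu=\pm 2\pi ik_1 t$ and an estimate from \cite{Mat2}) the uniform $\rho_N$-bound that you correctly flag as the delicate step.
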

 \begin{proof}
 First we assume $\Re s_1<0$ and $\Re s_k>1$ for $2\le k\le r$. Applying Lemmas \ref{lem4} and \ref{Lemma 4.1}, we have
 \begin{align*}
   & \mathscr{F}_\pm^r(s_1,\ldots,s_r)\\&=\begin{multlined}[t]
    \sum_{k_1,\ldots,k_{r-1}=1}^\infty \sigma_{s_1+\cdots+s_r-1}(k_1,\ldots,k_{r-1})\\\times\Bigg(\sum_{n=0}^{N-1}\sum_{m_1+\cdots+m_{r-1}=n}(\pm2\pi i)^{-s_2-\cdots-s_r-n}\frac{(-1)^nk_1^{m_2+\cdots+m_{r-1}}(s_2)_{m_1}\cdots(s_r)_{m_{r-1}}}{(k_1+k_2)^{m_2}\cdots (k_1+\cdots+k_{r-1})^{m_{r-1}}m_1!\cdots m_{r-1!}}\\\times\frac{(1-s_1)_n}{k_1^{s_2+n}(k_1+k_2)^{s_3}\cdots(k_1+\cdots+k_{r-1})^{s_r}}\\+\rho_N(s_1,\ldots,s_r;\pm2\pi ik_1,\pm2\pi i(k_1+k_2)\ldots,\pm2\pi i(k_1+\cdots+k_{r-1}))\Bigg)
    \end{multlined}\\
    &=\begin{multlined}[t]
    \sum_{n=0}^{N-1}(\pm2\pi i)^{-s_2-\cdots-s_r-n}(1-s_1)_n\sum_{m_1+\cdots+m_{r-1}=n}\frac{(-1)^n(s_2)_{m_1}\cdots(s_r)_{m_{r-1}}}{m_1!\cdots m_{r-1!}}
    \\\times\sum_{k_1,\ldots,k_{r-1}=1}^\infty \sigma_{s_1+\cdots+s_r-1}(k_1,\ldots,k_{r-1})\frac{1}{k_1^{s_2+m_1}(k_1+k_2)^{s_3+m_2}\cdots(k_1+\cdots+k_{r-1})^{s_r+m_{r-1}}}\\+\sum_{k_1,\ldots,k_{r-1}=1}^\infty \sigma_{s_1+\cdots+s_r-1}(k_1,\ldots,k_{r-1})\rho_N(s_1,\ldots,s_r;\pm2\pi ik_1,\pm2\pi i(k_1+k_2),\ldots,\pm2\pi i(k_1+\cdots+k_{r-1}))
    \end{multlined}\\
     &=\begin{multlined}[t]
    (\pm2\pi i)^{-s_2-\cdots-s_r}\sum_{n=0}^{N-1}(1-s_1)_n\sum_{m_1+\cdots+m_{r-1}=n}\frac{(-1)^n(s_2)_{m_1}\cdots(s_r)_{m_{r-1}}}{m_1!\cdots m_{r-1!}}
    \\\times\zeta_{EZ,1}(n-s_1+1)\zeta_{EZ,r-1}(s_2+m_1,s_3+m_2,\ldots,s_r+m_{r-1})\\+\sum_{k_1,\ldots,k_{r-1}=1}^\infty \sigma_{s_1+\cdots+s_r-1}(k_1,\ldots,k_{r-1})\rho_N(s_1,\ldots,s_r;\pm2\pi ik_1,\pm2\pi i(k_1+k_2),\ldots,\pm2\pi i(k_1+\cdots+k_{r-1})).
    \end{multlined}
 \end{align*}
Now we estimate $\rho_N$. We can show
\begin{align}
&\rho_N(s_1,\ldots,s_r;\pm2\pi ik_1,\ldots,\pm2\pi i(k_1+\cdots+k_{r-1}))\notag\\
&=\begin{multlined}[t](\pm2\pi ik_1)^{1-s_1-s_2}(\pm2\pi i(k_1+k_2))^{-s_3}\cdots(\pm2\pi i(k_1+\cdots+k_{r-1}))^{-s_r}\\\times\frac{1}{\Gamma(1-s_1)}\int_0^\infty e^{-(\pm2\pi ik_1)t}t^{-s_1}\\\times\int_0^t\frac{f(\eta;s_1,\ldots,s_r;2\pi ik_1,\ldots,\pm2\pi i(k_1+\cdots+k_{r-1}))(t-\eta)^{N-1}}{(N-1)!}d\eta dt\end{multlined}\notag\\
&=\begin{multlined}[t](\pm2\pi ik_1)^{1-s_1-s_2}(\pm2\pi i(k_1+k_2))^{-s_3}\cdots(\pm2\pi i(k_1+\cdots+k_{r-1}))^{-s_r}\\\times\frac{1}{\Gamma(1-s_1)}\sum_{m_1+\cdots+m_{r-1}=N}\frac{N}{m_1!\cdots m_{r-1}!}(-1)^{m_1}(s_2)_{m_1}\int_0^\infty e^{-(\pm2\pi ik_1)t}t^{-s_1}\\\times\int_0^t(1+\eta)^{-s_2-m_1}\prod_{j=2}^{r-1}\left(-\frac{k_1}{k_1+\cdots+k_j}\right)^{m_j}(s_{j+1})_{m_j}\\\times\left(1+\left(\frac{k_1}{k_1+\cdots+k_j}\right)\eta\right)^{-s_{j+1}-m_j}(t-\eta)^{N-1}d\eta dt\end{multlined}\notag\\
&=\begin{multlined}[t](\pm2\pi ik_1)^{1-s_1-s_2}(\pm2\pi i(k_1+k_2))^{-s_3}\cdots(\pm2\pi i(k_1+\cdots+k_{r-1}))^{-s_r}\\\times\frac{1}{\Gamma(1-s_1)}\sum_{m_1+\cdots+m_{r-1}=N}\frac{N}{m_1!\cdots m_{r-1}!}(-1)^{m_1}(s_2)_{m_1}\int_0^\infty e^{-(\pm2\pi ik_1)t}t^{-s_1+N-1}\\\times\int_0^t\left(1+\eta\right)^{-s_2-m_1}\prod_{j=2}^{r-1}\left(-\frac{k_1}{k_1+\cdots+k_j}\right)^{m_j}(s_{j+1})_{m_j}\\\times\left(1+\left(\frac{k_1}{k_1+\cdots+k_j}\right)\eta\right)^{-s_{j+1}-m_j}(1-\eta/t)^{N-1}d\eta dt.\end{multlined}\label{label7}
\end{align}

Putting $\frac{\eta}{t}=\xi$, and then $\pm2\pi ik_1t=\mu$, we have

\begin{align*}
  (\ref{label7})&=\begin{multlined}[t](\pm2\pi ik_1)^{-s_2-N}(\pm2\pi i(k_1+k_2))^{-s_3}\cdots(\pm2\pi i(k_1+\cdots+k_{r-1}))^{-s_r}\\\times\frac{1}{\Gamma(1-s_1)}\sum_{m_1+\cdots+m_{r-1}=N}\frac{N}{m_1!\cdots m_{r-1}!}(-1)^{m_1}(s_2)_{m_1}
\int_0^{\pm i\infty} e^{-\mu}\mu^{-s_1+N}\\\times\int_0^1\left(1\pm\frac{\xi\mu}{2\pi ik_1} \right)^{-s_2-m_1}\prod_{j=2}^{r-1}\left(-\frac{k_1}{k_1+\cdots+k_j}\right)^{m_j}(s_{j+1})_{m_j}\\\times\left(1\pm\left(\frac{k_1}{k_1+\cdots+k_j}\right)\frac{\xi\mu}{2\pi ik_1}\right)^{-s_{j+1}-m_j} (1-\xi)^{N-1}d\xi d\mu.\end{multlined}
\end{align*}
According to \cite{Mat2} if $\Re s_{j+1}\ge 0$, we have
\begin{equation}
  \left|\left(1\pm\left(\frac{k_1}{k_1+\cdots+k_j}\right)\frac{\xi\mu}{2\pi ik_1}\right)^{-s_{j+1}-m_i}\right|\le e^{\frac{\pi|\Im s_{j+1}|}{2}}.\notag
\end{equation}
Hence, using the fact and assuming $\Re s_1<N+1$, we obtain
\begin{align*}
  &|\rho_N(s_1,\ldots,s_r;\pm2\pi ik_1,\ldots,\pm2\pi i(k_1+\cdots+k_{r-1}))|\\&\le\begin{multlined}[t](2\pi k_1)^{-\Re s_2-N}(2\pi (k_1+k_2))^{-\Re s_3}\cdots(2\pi (k_1+\cdots+k_{r-1}))^{-\Re s_r}\\\times\frac{1}{|\Gamma(1-s_1)|}\sum_{m_1+\cdots+m_{r-1}=N}\frac{\Gamma(N-\Re s_1+1)}{m_1!\cdots m_{r-1}!}|(s_2)_{m_1}|\\\times
\prod_{j=2}^{r-1}\left(\frac{k_1}{k_1+\cdots+k_j}\right)^{m_j}|(s_{j+1})_{m_j}|e^{\pi(|\Im s_2|+\cdots+|\Im s_r|)} .\end{multlined}
\end{align*}
Finally, we have 
\begin{align*}
&\sum_{k_1,\ldots,k_{r-1}=1}^\infty \left|\sigma_{s_1+\cdots+s_r-1}(k_1,\ldots,k_{r-1})\rho_N(s_1,\ldots,s_r;\pm2\pi ik_1,\pm2\pi i(k_1+k_2),\ldots,\pm2\pi i(k_1+\cdots+k_{r-1}))\right|\\
\notag&\le \begin{multlined}[t]\frac{(2\pi)^{-\Re s_2-\cdots-\Re s_r-N}}{|\Gamma(1-s_1)|}e^{\pi(\Im s_2+\cdots+\Im s_r)}\sum_{m_1+\cdots+m_{r-1}=N}\frac{\Gamma(N-\Re s_1+1)}{m_1!\cdots m_{r-1}!}
\prod_{j=1}^{r-1}|(s_{j+1})_{m_j}|\\\times\sum_{k_1,\ldots,k_{r-1}=1}^\infty \frac{\sigma_{\Re(s_1+\cdots+s_r)-1}(k_1,\ldots,k_{r-1})}{k_1^{\Re s_2+m_1}(k_1+k_2)^{\Re s_3+m_2}\cdots(k_1+\cdots+k_{r-1})^{\Re s_r+m_{r-1}}}\end{multlined}
\end{align*}
and applying Lemma \ref{Lemma 4.1} we see that the last sum is equal to $\zeta_{EZ,1}(1-\Re s_1+N)\zeta_{EZ,r-1}(\Re s_2+m_1,\Re s_3+m_2,\ldots,\Re s_r+m_{r-1})$ and convergent absolutely when $\Re s_1<N$, $\Re s_2+\cdots+\Re s_r+N>r-1$ and $\Re (s_{r-k+1}+\cdots+s_r)>k$ for $1\le k\le r-2$.
Since $N$ is arbitrary, $\mathscr{F}_\pm^r(s_1,\ldots,s_r)$ can be continued meromorphically to the whole $\mathfrak{A}_r$ space.
\end{proof}
 Applying the equation
  \begin{equation}
    \Psi(b,c;x)=x^{1-c}\Psi(b-c+1,2-c;x),\notag
  \end{equation}
 and putting $\ell_j=dk_j$, we have
  \begin{align}
    &\mathscr{F}_\pm^r(s_1,\ldots,s_r)\notag\\&= (\pm2\pi i)^{1-s_1-\cdots-s_r}\begin{multlined}[t]\sum_{\ell_1,\ell_2,\ldots,\ell_{r-1}=1}^\infty \sigma_{EZ,r-1}(1-s_1-s_2,-s_3,\ldots,-s_r;\ell_1,\ell_2,\ldots,\ell_{r-1})\\\times\sum_{m_1=0}^\infty\frac{(s_3)_{m_1}(1-(\frac{\ell_1}{\ell_1+\ell_2}))^{m_1}}{m_1!}\cdots\sum_{m_{r-2}=0}^\infty\frac{(s_r)_{m_{r-2}}(1-(\frac{\ell_1}{\ell_1+\cdots+\ell_{r-1}}))^{m_{r-2}}}{m_{r-2}!}\\\times(1-s_1)_{m_1+\cdots+m_{r-2}}\Psi(1-s_1+m_1+\cdots+m_{r-2},2-s_1-\cdots-s_r;\pm2\pi i\ell_1).\end{multlined}\label{label5}
 \end{align}
 Here we introduce a new multiple zeta functions $\zeta_{A,r}(s_1,\ldots,s_{r-1};t_1,\ldots,t_{r-1};s_r)$ by the series
  \begin{align*}
    \zeta_{A,r}(s_1,s_2,\ldots,s_r;t_1,t_2\ldots,t_{r-1})\coloneqq\begin{multlined}[t]\sum_{n_1,n_2,n_3\ldots,n_r=1}^\infty\frac{1}{n_1^{s_1}(n_1+n_2)^{s_2}\cdots(n_1+\cdots+n_r)^{s_r}}\\\times\frac{1}{n_2^{t_1}(n_2+n_3)^{t_2}\cdots(n_2+\cdots+n_r)^{t_{r-1}}}.\end{multlined}
  \end{align*}
  \begin{rem}
  Let $\mathfrak{g}$ be a complex semisimple Lie algebra of rank $r$. We denote by $\Delta=\Delta(\mathfrak{g})$ the set of all roots of $\mathfrak{g}$, by $\Delta_+=\Delta_+(\mathfrak{g})$ the set of all positive roots of $\mathfrak{g}$.
  Here we denote the zeta functions of the root system by the series
  \begin{equation} \zeta_r(\bm{s};\Delta)\coloneqq\sum_{m_1,\ldots,m_r=1}^\infty\prod_{\alpha\in\Delta_+}\langle \alpha^{\vee},m_1\lambda_1+\cdots+m_r\lambda_r\rangle^{-s_\alpha},\notag
  \end{equation}
  where $\bm{s}=(s_\alpha)_{\alpha\in\Delta_+}\in\mathbb{C}^n$ and $n=|\Delta_+|$ is the number of positive roots of $\mathfrak{g}$.
  When $\Delta$ corresponds to the root system of type $A_r$, $\zeta_r(\bm{s};\Delta)$ is sometimes written as $\zeta_r(\bm{s};A_r)$.
In the case $r=2$, we have $\zeta_{A,2}(s_1,s_2;t_1)=\zeta_{MT,2}(s_1,t_1;s_2)=\zeta_2(s_1,t_1,s_2;A_2)$. In the case $r=3$, we have $\zeta_{A,3}(s_1,s_2,s_3;t_1,t_2)=\zeta_3(s_1,t_1,0,s_2,t_2,s_3;A_3)$.
Similarly, it is easy to see that a general $\zeta_{A,r}(s_1,\ldots,s_{r-1};t_1,\ldots,t_{r-1};s_r)$  can also be expressed in terms of $\zeta_r(\bm{s};A_r)$.
Since $\zeta_r(\bm{s};\Delta)$ can be analytically continued to the whole $\mathbb{C}^n$ space, according to \cite{Y.K.H.2}, and since $\zeta_{A,r}(s_1,\ldots,s_{r-1};t_1,\ldots,t_{r-1};s_r)$ is a special case of $\zeta_r(\bm{s};\Delta)$, it can also be analytically continued to $\mathbb{C}^{2r-1}$ space.
\end{rem}
\begin{lem}\label{lem2}
We have
\[\zeta_{A,r}(s_1,\ldots,s_r;t_1,\ldots,t_{r-1})=\zeta_r(\bm{z};A_r).\] 
Here we set $\bm{z}$ by
\begin{equation*}
  \bm{z}\coloneqq \left\{ (z_1,\ldots,z_n)\in\mathbb{C}^n \middle|
  \begin{aligned} 
    &\;z_j=s_k\;\textup{if}\;j=1+\frac{(k-1)(2r-k+2)}{2}\;\textup{for}\;1\le k\le r 
     \\
     &\;z_j=t_k\;\textup{if}\;j=2+\frac{(k-1)(2r-k+2)}{2}\;\textup{for}\;1\le k\le r-1\\
    &\;z_j=0,\;\textup{otherwise}
  \end{aligned}
  \right\}
\end{equation*}
for $1\le j\le n$.
\end{lem}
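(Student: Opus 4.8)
\emph{Proof proposal.}
The plan is to reduce the asserted identity to a termwise comparison of the two Dirichlet series in their common domain of absolute convergence, and then to conclude by uniqueness of meromorphic continuation. First I would recall the structure of the root system of type $A_r$: its positive roots are $\alpha_{i,j}\coloneqq\alpha_i+\alpha_{i+1}+\cdots+\alpha_j$ for $1\le i\le j\le r$, there are $n=\tfrac{r(r+1)}{2}$ of them, and the pairing with the fundamental weights gives $\langle\alpha_{i,j}^\vee,m_1\lambda_1+\cdots+m_r\lambda_r\rangle=m_i+m_{i+1}+\cdots+m_j$. Hence, for any fixed enumeration of $\Delta_+$,
\[
  \zeta_r(\bm{s};A_r)=\sum_{m_1,\ldots,m_r=1}^\infty\prod_{1\le i\le j\le r}(m_i+\cdots+m_j)^{-s_{\alpha_{i,j}}},
\]
and the function is plainly unchanged if one simultaneously permutes $\Delta_+$ and the components of $\bm s$, so we are free to choose the enumeration that makes the stated $\bm z$ correct.

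Next I would fix the enumeration of $\Delta_+$ by \emph{height then starting index}: list first the height $j-i=0$ roots $\alpha_{1,1},\ldots,\alpha_{r,r}$, then the height $1$ roots $\alpha_{1,2},\ldots,\alpha_{r-1,r}$, and so on up to the unique height $r-1$ root $\alpha_{1,r}$, ordering the roots of a given height by increasing first index. A direct count shows that $\alpha_{i,j}$ then occupies position $\ell(i,j)=(j-i)r-\binom{j-i}{2}+i$. Specializing gives $\ell(1,k)=1+\tfrac{(k-1)(2r-k+2)}{2}$ and $\ell(2,k+1)=2+\tfrac{(k-1)(2r-k+2)}{2}$, which are exactly the positions at which $\bm z$ is prescribed to equal $s_k$ (for $1\le k\le r$) and $t_k$ (for $1\le k\le r-1$). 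Moreover every positive root with starting index $1$ is an $\alpha_{1,k}$, every one with starting index $2$ is an $\alpha_{2,k+1}$, and all remaining positive roots have starting index $\ge 3$; so the remaining components of $\bm z$ are $0$, consistently with the definition.

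With this dictionary I would substitute $\bm z$ into the displayed formula for $\zeta_r(\bm z;A_r)$. The factor attached to $\alpha_{1,k}$ becomes $(m_1+\cdots+m_k)^{-s_k}$, producing the block $n_1^{-s_1}(n_1+n_2)^{-s_2}\cdots(n_1+\cdots+n_r)^{-s_r}$ after renaming $n_i\coloneqq m_i$; the factor attached to $\alpha_{2,k+1}$ becomes $(m_2+\cdots+m_{k+1})^{-t_k}$, producing the block $n_2^{-t_1}(n_2+n_3)^{-t_2}\cdots(n_2+\cdots+n_r)^{-t_{r-1}}$; and every root of starting index $\ge 3$ carries exponent $0$, contributing the factor $1$. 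Collecting these, the general term of $\zeta_r(\bm z;A_r)$ coincides with the general term of $\zeta_{A,r}(s_1,\ldots,s_r;t_1,\ldots,t_{r-1})$, so the two series agree wherever both converge absolutely; since, by the remark preceding the lemma, both sides extend meromorphically to $\mathbb{C}^{2r-1}$, the identity holds throughout.

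I expect the only genuine obstacle to be the combinatorial bookkeeping: one must verify carefully that the closed-form indices $1+\tfrac{(k-1)(2r-k+2)}{2}$ and $2+\tfrac{(k-1)(2r-k+2)}{2}$ really do list the positions of $\alpha_{1,k}$ and $\alpha_{2,k+1}$ under the height–lexicographic order, that these indices are pairwise distinct, and that none of them accidentally receives the value $0$. Everything else is a direct substitution.
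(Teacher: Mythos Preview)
Your proposal is correct. The paper states this lemma without proof, having remarked just before it that ``it is easy to see that a general $\zeta_{A,r}$ can also be expressed in terms of $\zeta_r(\bm{s};A_r)$''; your argument supplies exactly the direct verification this remark leaves to the reader---writing out the positive roots $\alpha_{i,j}$ of $A_r$, computing the pairings $\langle\alpha_{i,j}^\vee,\sum m_\ell\lambda_\ell\rangle=m_i+\cdots+m_j$, fixing the height--lexicographic enumeration, and checking that the prescribed index formulas pick out precisely the roots $\alpha_{1,k}$ and $\alpha_{2,k+1}$---so there is nothing to add.
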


 \begin{thm}\label{thm4}
   The function $\mathscr{G}_r(s_1,\ldots,s_r)$ can be continued meromorphically to the whole $\mathbb{C}^r$ space.
 \end{thm}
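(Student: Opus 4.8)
The plan is to combine two facts already available. On the one hand, definition (\ref{div}) reads $\mathscr{G}_r=\zeta_{EZ,r}-I_r$, where $I_r$ is the $r$-fold integral subtracted in (\ref{div}), and $\zeta_{EZ,r}$ is meromorphic on $\mathbb{C}^r$ by Theorem \ref{thm3}; hence the whole matter is the meromorphic continuation of $I_r$. On the other hand, Theorem \ref{Theorem3} expresses $\mathscr{G}_r$ on $\{\Re s_1<0,\ \Re s_k>1\ (2\le k\le r)\}$ as $(2\pi)^{\textup{wt}(\bm{s})-1}\Gamma(1-s_1)$ times a combination of $\mathscr{F}_\pm^r$, and since $\mathscr{F}_\pm^r$ continues meromorphically to $\mathfrak{A}_r$ this already extends $\mathscr{G}_r$ (hence $I_r$) to all of $\mathfrak{A}_r$. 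So only the walls $\Re s_k=1$, $3\le k\le r$, remain to be crossed. It helps to first simplify $I_r$: doing the innermost integral by the beta integral $\int_0^\infty t_1^{s_1-1}(t_1+a)^{-1}\,dt_1=\Gamma(s_1)\Gamma(1-s_1)a^{s_1-1}$ (valid for $0<\Re s_1<1$, with $a=t_2+\cdots+t_r$) and expanding each $1/(e^{t_j+\cdots+t_r}-1)$ geometrically yields, in a suitable region,
\[
  I_r=\sum_{m_1,\ldots,m_{r-1}=1}^\infty\int_0^\infty u^{-s_1}\prod_{j=1}^{r-1}(u+m_1+\cdots+m_j)^{-s_{j+1}}\,du ,
\]
the analogue of $\zeta_{EZ,r}$ in which the first summation variable has become continuous. (For $r=2$ this collapses to $\Gamma(1-s_1)\Gamma(s_1+s_2-1)\Gamma(s_2)^{-1}\zeta(s_1+s_2-1)$, matching (\ref{label20}), so $\mathscr{G}_2$ is Matsumoto's $g$.)

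To cross the walls $\Re s_k=1$ I would push the continuation of $\mathscr{F}_\pm^r$ further, using the tools assembled above. Starting from the explicit form (\ref{label5}) of $\mathscr{F}_\pm^r$, valid for $\Re s_1<0$, $\Re s_k>1$, I substitute into each factor $\Psi(1-s_1+M,2-\textup{wt}(\bm{s});\pm2\pi i\ell_1)$, with $M=m_1+\cdots+m_{r-2}$, its asymptotic expansion with integral remainder as in Lemma \ref{lem4}: the leading part is $(\pm2\pi i\ell_1)^{-(1-s_1+M)}$ times a finite hypergeometric polynomial in $\ell_1^{-1}$, and the remainder is $O(|\ell_1|^{-(1-\Re s_1+M)-P})$ with $P$ arbitrary. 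Summing over $m_1,\ldots,m_{r-2}$ in the leading part produces Lauricella $F_D^{(r-2)}$-type expressions in the ratios $\ell_1/(\ell_1+\cdots+\ell_j)$ (cf.\ Lemma \ref{lem1}); expanding $\sigma_{EZ,r-1}$ by its definition and resumming, the resulting lattice sums reorganise into finitely many $\zeta_{A,r-1}$-type series, which by Lemma \ref{lem2} are zeta functions of the root system of type $A_{r-1}$ and hence, by \cite{Y.K.H.2}, meromorphic on all of $\mathbb{C}^{2r-3}$; this is exactly what lifts the conditions $\Re s_k>1$. The remainder sum, estimated as $\rho_N$ was in the proof of the continuation of $\mathscr{F}_\pm^r$, is holomorphic on a half-space $\Re s_1<C-P$ with $C$ independent of $P$, so letting $P\to\infty$ removes the condition on $s_1$. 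Multiplying back by the meromorphic prefactor $(2\pi)^{\textup{wt}(\bm{s})-1}\Gamma(1-s_1)e^{\pm\frac{\pi i}{2}(\textup{wt}(\bm{s})-1)}$ then continues $\mathscr{G}_r$ to $\mathbb{C}^r$. (Equivalently one could continue the simplified integral for $I_r$ directly, by iterated Mellin--Barnes splitting of the factors $(u+m_1+\cdots+m_j)^{-s_{j+1}}$, which again terminates in type-$A$ root-system zeta functions.)

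The hard part will be the middle step. First, one must justify interchanging the summation over $\ell_1,\ldots,\ell_{r-1}$ with the asymptotic expansion of $\Psi$ and with the $m$-summations; this needs uniform control of the multiple hypergeometric ($F_D$) series in $m$ across the whole lattice, for which the bounds on $\rho_N$ and on the associated $\ell$-sum from the proof of the continuation of $\mathscr{F}_\pm^r$ should adapt, but the extra factors $\bigl(1+(\ell_1/(\ell_1+\cdots+\ell_j))\eta\bigr)^{-s_{j+1}-k_j}$ occurring inside the remainder integral must be tracked with care. Second, one must match the leading lattice sums with $\zeta_{A,r-1}$ precisely — reconciling all the Pochhammer symbols, the shifts $s_k\mapsto s_k+m_{k-2}$, and the exponents produced by $\sigma_{EZ,r-1}$ — so that Lemma \ref{lem2} and the known continuation of $\zeta_r(\bm{s};A_r)$ apply verbatim. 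The remaining manipulations (the beta integral, the geometric expansions, multiplying by $\Gamma$-factors) are routine.
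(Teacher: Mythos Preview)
Your parenthetical alternative --- continue the simplified integral for $I_r$ directly --- is exactly what the paper does; your main route through $\mathscr{F}_\pm^r$ is a detour the paper never takes.

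Concretely: the paper starts from your formula
\[
I_r=\sum_{n_1,\ldots,n_{r-1}\ge1}\int_0^\infty u^{-s_1}\prod_{j=1}^{r-1}(u+n_1+\cdots+n_j)^{-s_{j+1}}\,du
\]
(its (\ref{label21})--(\ref{label22})), but then instead of ``iterated Mellin--Barnes splitting'' it observes that, after pulling out the factors $n_1^{1-s_1-s_2}(n_1+n_2)^{-s_3}\cdots$, the $u$-integral is \emph{precisely} the integral of Lemma~\ref{lem1}, hence equals a beta factor times the Lauricella function $F_D^{(r-2)}$ evaluated at the ratios $n_2/(n_1+n_2),\ldots$. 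It then inserts the single known Mellin--Barnes integral for $F_D^{(N)}$ from \cite{SI}, under which the $(n_1,\ldots,n_{r-1})$-sum collapses verbatim to $\zeta_{A,r-1}(s_1+s_2-1,s_3+t_1,\ldots,s_r+t_{r-2};-t_1,\ldots,-t_{r-2})$; Lemma~\ref{lem2} identifies this with a root-system zeta, and the polynomial-times-exponential bound of \cite[Theorem 7.8]{Y.K.H.2} guarantees convergence of the contour integral everywhere. That is the whole proof --- no asymptotic expansion of $\Psi$, no remainder estimate, no divisor weights to disentangle.

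Your main approach, by contrast, routes through (\ref{label5}), which carries the divisor factor $\sigma_{EZ,r-1}(1-s_1-s_2,-s_3,\ldots,-s_r;\ell_1,\ldots,\ell_{r-1})$. The step you call ``the resulting lattice sums reorganise into finitely many $\zeta_{A,r-1}$-type series'' is where the real difficulty hides: once you open $\sigma_{EZ,r-1}$ as $\sum_{d_j\mid\ell_j}$, the divisor variables $d_j$ and the multiples $\ell_j$ remain entangled both in the ratios $\ell_1/(\ell_1+\cdots+\ell_j)$ inside the $m$-sums and in the asymptotic coefficients of $\Psi$, and it is not at all evident they separate into a clean $\zeta_{A,r-1}$. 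In the paper's direct route no divisor sum ever appears, so this obstacle simply does not arise. What your approach would buy, were it to go through, is a continuation of $\mathscr{F}_\pm^r$ itself beyond $\mathfrak{A}_r$; the paper does not attempt this and in fact notes (in the remark following Theorem~\ref{thm4}) that its own ``self-contained'' argument for $\mathscr{G}_r$, which is closer in spirit to what you sketch, yields only $\mathfrak{A}_r$.
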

 \begin{proof}
 By substituting $t_1=(t_2+\cdots+t_r)\eta$, we have
   \begin{align}
     &\frac{1}{\Gamma(s_1)\cdots\Gamma(s_r)}\int_0^\infty\frac{t_r^{s_r-1}}{e^{t_r}-1}\int_0^\infty\cdots\int_0^\infty\frac{t_2^{s_2-1}}{e^{t_2+\cdots+t_r}-1}\int_0^\infty\frac{t_1^{s_1-1}}{t_1+\cdots+t_r}dt_1\cdots dt_r\notag\\
     &=\begin{multlined}[t]\frac{\Gamma(1-s_1)}{\Gamma(s_2)\cdots\Gamma(s_r)}\sum_{n_1,\ldots,n_{r-1}=1}^\infty\int_0^\infty e^{-(n_1+\cdots+n_{r-1})t_r}t_r^{s_r-1}\\\times\int_0^\infty\cdots\int_0^\infty e^{-n_1t_2}t_2^{s_2-1}(t_2+\cdots+t_r)^{s_1-1}dt_2\cdots dt_r\end{multlined}\notag\\
    &=\Gamma(1-s_1)\sum_{n_1,\ldots,n_{r-1}=1}^\infty \Psi_{r-1}(s_1,\ldots,s_r;n_1,\ldots,n_1+\cdots+n_{r-1};0).\label{label21}
   \end{align}
   Applying equation (\ref{label16}), we have
   \begin{align}
   (\ref{label21})&=\begin{multlined}[t]\sum_{n_1,\ldots,n_{r-1}=1}^\infty n_1^{1-s_1-s_2}(n_1+n_2)^{-s_3}\cdots(n_1+\cdots+n_{r-1})^{-s_r}\\\times\int_0^\infty t^{-s_1}(1+t)^{-s_2}\left(1+\frac{n_1}{n_1+n_2}t\right)^{-s_3}\cdots\left(1+\frac{n_1}{n_1+\cdots+n_{r-1}}t\right)^{-s_r}dt.\end{multlined}\label{label22}
   \end{align}
   Hence, by Lemma \ref{lem1} we have
   \begin{align}
   (\ref{label22})&=\begin{multlined}[t] \frac{\Gamma(1-s_1)\Gamma(s_1+\cdots+s_r-1)}{\Gamma(s_2+\cdots+s_r)}\sum_{n_1,\ldots,n_{r-1}=1}^\infty n_1^{1-s_1-s_2}(n_1+n_2)^{-s_3}\cdots(n_1+\cdots+n_{r-1})^{-s_r}\\\times F_D^{(r-2)}\left(s_3,\ldots,s_r;1-s_1,s_2+\cdots+s_r;\frac{n_2}{n_1+n_2},\ldots,\frac{n_2+\cdots+n_{r-1}}{n_1+\cdots+n_{r-1}}\right).\label{label23}\end{multlined}
   \end{align}
   According to \cite{SI}, the Lauricella function has the Mellin--Barnes integral representation
   \begin{align}
     F_D^{(N)}(\mathbf{a};b,c;\mathbf{z})=\begin{multlined}[t]\frac{\Gamma(c)}{(2\pi i)^N\Gamma(b)\Gamma(a_1)\cdots\Gamma(a_N)}\\
     \times\int_{\mathcal{L}_1}\cdots\int_{\mathcal{L}_N}\frac{\Gamma(b+t_1+\cdots+t_N)}{\Gamma(c+t_1+\cdots+t_N)}\left(\prod_{j=1}^N\Gamma(a_j+t_j)\Gamma(-t_j)(-z_j)^{t_j}\right)dt_1\cdots dt_N,\end{multlined}\notag
   \end{align}
   where $\mathcal{L}_j$ is a contour in the $t_j$-plane which is a deformed imaginary axis, that is, it connects $-i\infty$ and $+i\infty$ but is curved so that among all the poles of the integrand only the poles of $\Gamma(-t_j)$ lie to the right of $\mathcal{L}_j$. (See \cite[Equation (1.7)]{SI}.)
   We have
   \begin{align}
     (\ref{label23})&=\begin{multlined}[t]
     \frac{\Gamma(1-s_1)\Gamma(s_1+\cdots+s_r-1)}{\Gamma(s_2+\cdots+s_r)}\sum_{n_1,\ldots,n_{r-1}=1}^\infty n_1^{1-s_1-s_2}(n_1+n_2)^{-s_3}\cdots(n_1+\cdots+n_{r-1})^{-s_r}\\\times
     \frac{\Gamma(s_2+\cdots+s_r)}{(2\pi i)^{r-2}\Gamma(1-s_1)\Gamma(s_3)\cdots\Gamma(s_r)}
     \int_{\mathcal{L}_1}\cdots\int_{\mathcal{L}_{r-2}}\frac{\Gamma(1-s_1+t_1+\cdots+t_{r-2})}{\Gamma(s_2+\cdots+s_r+t_1+\cdots+t_{r-2})}\\\times\left(\prod_{j=1}^{r-2}\Gamma(s_{j+2}+t_j)\Gamma(-t_j)\left(-\frac{n_2+\cdots+n_{j+1}}{n_1+\cdots+n_{j+1}}\right)^{t_j}\right)dt_1\cdots dt_{r-2}
   \end{multlined}\notag\\
   &=\begin{multlined}[t]
     \frac{\Gamma(s_1+\cdots+s_r-1)}{(2\pi i)^{r-2}\Gamma(s_3)\cdots\Gamma(s_r)}
     \int_{\mathcal{L}_1}\cdots\int_{\mathcal{L}_{r-2}}\frac{\Gamma(1-s_1+t_1+\cdots+t_{r-2})}{\Gamma(s_2+\cdots+s_r+t_1+\cdots+t_{r-2})}\\\times\left(\prod_{j=1}^{r-2}\Gamma(s_{j+2}+t_j)\Gamma(-t_j)\right)\\\times(-1)^{t_1+\cdots+t_{r-2}}\zeta_{A,r-1}(s_1+s_2-1,s_3+t_1,\ldots,s_{r-1}+t_{r-3},s_r+t_{r-2};-t_1,\ldots,-t_{r-2})dt_1\cdots dt_{r-2}.
   \end{multlined}\notag
   \end{align}
   Applying \cite[Theorem 7.8]{Y.K.H.2}, the zeta functions of root systems $\zeta_r(\bm{z};\Delta)$ is bounded by \[O((\text{polynomials in } z_i)e^{\theta_i|\Im z_i|}),\quad|\theta_i|<\frac{\pi}{2}\] in terms of $z_i$ and for $1\le i\le n$. This ensures the convergence of the integral. Hence, by Lemma \ref{lem2} we complete the proof.
 \end{proof}
 \begin{rem}
   The author has also found a self-contained proof without using the existing theory of zeta functions of root systems. This is obtained by applying Taylor's theorem to the integrand of $\Psi_{r-1}(s_1,\ldots,s_r;n_1,\ldots,n_1+\cdots+n_{r-1};0)$. However, this method yields analytic continuation only over the whole $\mathfrak{A}_r$.
 \end{rem}
Applying the previous facts, we give the proof of Theorem \ref{main theorem}.
 
\begin{proof}[Proof of Theorem \ref{main theorem}]
\phantom{a}\\
Changing $(s_1,s_2,s_3,\ldots,s_r)$ by $(1-\textup{wt}(\bm{s})+s_1,1-\textup{wt}(\bm{s})+s_2,s_3,\ldots,s_r)$ in Theorem \ref{Theorem3} and applying equation (\ref{label5}), we have 
\begin{align}
   &\mathscr{G}_r(1-\textup{wt}(\bm{s})+s_1,1-\textup{wt}(\bm{s})+s_2,s_3,\ldots,s_r)\notag\\&=\begin{multlined}[t]\Gamma(\textup{wt}(\bm{s})-s_1)\sum_{k_1,\ldots,k_{r-1}=1}^\infty \sigma_{EZ,r-1}(2\textup{wt}(\bm{s})-s_1-s_2-1,-s_3,\ldots,-s_r;k_1,\ldots,k_{r-1})\\\times\sum_{m_1=0}^\infty\frac{(s_3)_{m_1}(1-\frac{k_1}{k_1+k_2})^{m_1}}{m_1!}\cdots\sum_{m_{r-2}=0}^\infty\frac{(s_r)_{m_{r-2}}(1-\frac{k_1}{k_1+\cdots+k_{r-1}})^{m_{r-2}}}{m_{r-2}!}\\\times(\textup{wt}(\bm{s})-s_1)_{m_1+\cdots+m_{r-2}}\Psi(\textup{wt}(\bm{s})-s_1+m_1+\cdots+m_{r-2},\textup{wt}(\bm{s});2\pi ik_1;1)\\+\Gamma(\textup{wt}(\bm{s})-s_1)\sum_{k_1,\ldots,k_{r-1}=1}^\infty\sigma_{EZ,r-1}(2\textup{wt}(\bm{s})-s_1-s_2-1,-s_3,\ldots,-s_r;k_1,\ldots,k_{r-1})\\\times\sum_{m_1=0}^\infty\frac{(s_3)_{m_1}(1-\frac{k_1}{k_1+k_2})^{m_1}}{m_1!}\cdots\sum_{m_{r-2}=0}^\infty\frac{(s_r)_{m_{r-2}}(1-\frac{k_1}{k_1+\cdots+k_{r-1}})^{m_{r-2}}}{m_{r-2}!}\\\times(\textup{wt}(\bm{s})-s_1)_{m_1+\cdots+m_{r-2}}\Psi(\textup{wt}(\bm{s})-s_1+m_1+\cdots+m_{r-2},\textup{wt}(\bm{s});-2\pi ik_1;1).\end{multlined}\label{label8}
\end{align}
By Theorem \ref{Theorem3} and (\ref{label8}), we obtain Theorem \ref{main theorem}.
\end{proof}
\begin{rem}
 The formula (\ref{label8}) can be regarded as an analogue of \cite[Theorem 2.1]{Y.K.H.1}.
\end{rem}
 
\end{document}